\DeclareMathOperator{\tr}{tr}
\DeclareMathOperator{\Sp}{Sp}
\DeclareMathOperator{\GSp}{GSp}
\DeclareMathOperator{\End}{End}
\DeclareMathOperator{\GL}{GL}
\DeclareMathOperator{\Ker}{Ker}
\DeclareMathOperator{\Res}{Res}
\DeclareMathOperator{\Gal}{Gal}
\DeclareMathOperator{\lcm}{lcm}
\newcommand{\Q}{{\mathbb Q}}
\newcommand{\Z}{{\mathbb Z}}
\newcommand{\F}{{\mathbb F}}
\newcommand{\OO}{\mathcal{O}}
\def\F{{\ensuremath{\mathbb{F}}}}
\begin{document}%\selectlanguage{english}

\newtheorem{thm}{Theorem}
\newtheorem{lem}{Lemma}[section]
\newtheorem{prop}[lem]{Proposition}
\newtheorem{cor}[lem]{Corollary}
\newtheorem{conj}{Conjecture}

\theoremstyle{definition}
\newtheorem*{remark}{Remark}
\newtheorem*{ex}{Example}

\theoremstyle{remark}
\newtheorem{ack}{Acknowledgement}

\title[Residual Representations]{Residual Representations of Semistable\\
Principally Polarized Abelian Varieties}
\author{Samuele Anni, Pedro Lemos and Samir Siksek}

\address{Mathematics Institute\\
	University of Warwick\\
	Coventry\\
	CV4 7AL \\
	United Kingdom}
\email{samuele.anni@gmail.com}
\email{lemos.pj@gmail.com}
\email{samir.siksek@gmail.com}

\date{\today}
\thanks{The first-named and third-named authors are supported by EPSRC Programme Grant 
\lq LMF: L-Functions and Modular Forms\rq\  EP/K034383/1.
}

\keywords{Galois representations, abelian varieties, semistability, Serre's uniformity}
\subjclass[2010]{Primary 11F80, Secondary 11G10, 11G30}

\begin{abstract}
Let $A/\Q$ be a semistable principally 
polarized abelian variety of dimension $d \ge 1$.
Let $\ell$ be a prime and let $\overline{\rho}_{A,\ell}\colon G_\Q \rightarrow \GSp_{2d}(\F_\ell)$ be the representation
giving the action of $G_\Q :=\Gal(\overline{\Q}/\Q)$
on the $\ell$-torsion group $A[\ell]$. 
We show that if 
$\ell \ge \max(5,d+2)$, 
 and if image of $\overline{\rho}_{A,\ell}$
contains a transvection then $\overline{\rho}_{A,\ell}$ is either
reducible or surjective.

With the help of this we study surjectivity of $\overline{\rho}_{A,\ell}$
for semistable polarized abelian threefolds, 
and give an example of a genus $3$ hyperelliptic curve $C/\Q$
such that $\overline{\rho}_{J,\ell}$ is surjective for all primes $\ell \ge 3$,
where $J$ is the Jacobian of $C$.
\end{abstract}
\maketitle

\section{Introduction}
Let $A$ be a principally polarized abelian variety of dimension $d$ defined over $\Q$.
Let $\ell$ be
a prime 
and write $A[\ell]$ for the $\ell$-torsion subgroup of 
$A(\overline{\Q})$. This is a $2d$-dimensional $\F_\ell$-vector space,
as well as a $G_\Q$-module, where $G_\Q:=\Gal(\overline{\Q}/\Q)$.
The polarization induces the mod $\ell$
Weil pairing on $A[\ell]$, which is a bilinear, alternating,
non-degenerate
pairing 
\[
\langle~,~\rangle \; \colon \; A[\ell] \times A[\ell] \rightarrow \F_\ell(1)
\]
that is Galois equivariant. The latter property means
$\langle \sigma v, \sigma v^\prime \rangle=\chi(\sigma) \langle v,v^\prime \rangle$
for all $\sigma \in G_\Q$, and $v$, $v^\prime \in A[\ell]$ where
$\chi \colon  G_\Q \rightarrow \F_\ell^\times$ is the mod $\ell$ cyclotomic character.
In particular,
the space $(A[\ell], \langle~,~\rangle)$ is a symplectic $\F_\ell$-vector
space of dimension $2d$. We obtain a representation
\[
\overline{\rho}_{A,\ell} \; \colon \; G_\Q \rightarrow \GSp(A[\ell], \langle~,~\rangle)
\cong \GSp_{2d}(\F_\ell).
\]
The study of images of representations $\overline{\rho}_{A,\ell}$
has received much attention recently (e.g.\ \cite{dieulefait4}, \cite{Hall11}, \cite{Sara14}). 
This study is largely motivated by the following remarkable result of Serre.
\begin{thm}[Serre, {\cite[Theorem~3]{serreIV}}] 
Let $A$ be a principally polarized abelian variety of dimension $d$, 
defined over $\Q$. 
Assume that $d = 2$, $6$ or $d$ is odd and furthermore assume that 
$\End_{\overline{\Q}}(A)=\Z$. Then there exists a bound $B_{A}$ such that 
for all primes $\ell > B_{A}$ the representation $\overline{\rho}_{A,\ell}$
is surjective.
\end{thm}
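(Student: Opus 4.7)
The plan is to exploit the classification of maximal subgroups of $\Sp_{2d}(\F_\ell)$ together with the rigidity provided by Faltings' theorems. Because the similitude multiplier on $\overline{\rho}_{A,\ell}$ equals the mod $\ell$ cyclotomic character, which is surjective onto $\F_\ell^\times$ whenever $\ell\geq 3$, it suffices to prove that $\overline{\rho}_{A,\ell}(G_\Q)\cap \Sp_{2d}(\F_\ell)=\Sp_{2d}(\F_\ell)$ for all sufficiently large $\ell$. Assuming otherwise for infinitely many $\ell$, the image would sit in a maximal subgroup $M_\ell\subsetneq \Sp_{2d}(\F_\ell)$, and by the Aschbacher--Liebeck--Seitz theorem, $M_\ell$ would belong to one of nine explicit families; the remainder of the proof would dispose of each family in turn, producing a bound $B_A$ beyond which no such $M_\ell$ can occur.

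For the ``geometric'' classes---stabilizers of a proper subspace, of a direct sum decomposition, of an extension-field structure on $A[\ell]$, or of an auxiliary classical form---I would use Faltings' semisimplicity and isogeny theorems to lift the structure from $A[\ell]$ (available for infinitely many $\ell$) to the rational Tate module, and then to a corresponding geometric structure on $A$ itself. This would yield either an isogeny splitting, a non-trivial product decomposition, or an extra $\overline{\Q}$-endomorphism of $A$, each of which contradicts $\End_{\overline{\Q}}(A)=\Z$. The ``exceptional'' classes---normalizers of $\ell$-local subgroups and almost simple groups in irreducible representations---can be ruled out by fixing a prime $p$ of good reduction and observing that the characteristic polynomial of $\overline{\rho}_{A,\ell}(\Frob_p)$ is the reduction mod $\ell$ of the fixed integer polynomial $P_{A,p}(T)$; for $\ell$ sufficiently large, the Frobenius image would have an order and spectrum incompatible with the restricted arithmetic of those small or rigid subgroups.

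The principal obstacle is the tensor decomposition class $\mathcal{C}_4$ (together with its tensor-induced cousin $\mathcal{C}_7$), where one must exclude every Galois-stable decomposition $A[\ell]=V_1\otimes V_2$ with $\dim V_1 \cdot \dim V_2 = 2d$ compatible with the symplectic form. Such a decomposition does \emph{not} directly produce a new endomorphism of $A$, so the endomorphism hypothesis alone is insufficient. This is precisely where the numerical restriction on $d$ enters: combining the symplectic compatibility with the Hodge--Tate weights of the $\ell$-adic Tate module (which are $0$ and $1$, each with multiplicity $d$), a Mumford--Tate and Lie-algebraic analysis rules out every non-trivial tensor factorization when $d$ is odd or $d\in\{2,6\}$. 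For other even $d$ (beginning with $d=4$) such tensor structures can genuinely arise, which is exactly why Serre's statement restricts to these dimensions. I would expect this tensor-product step, rather than the reducibility or exceptional classes, to be the technical heart of the argument.
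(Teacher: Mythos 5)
The paper does not prove this theorem: it is quoted from Serre's course r\'esum\'e precisely as motivation, so there is no in-paper argument to compare against. What can be compared is your proposal versus Serre's actual method, and the comparison is instructive. Serre does not work directly mod $\ell$ with Aschbacher's classification. Instead he works $\ell$-adically: using Faltings' semisimplicity and isogeny theorems together with the hypothesis $\End_{\overline{\Q}}(A)=\Z$, he shows that the Lie algebra $\mathfrak{g}_\ell$ of the $\ell$-adic monodromy group is reductive, contains the homotheties, has commutant $\Q_\ell$, and acts on $V_\ell(A)\otimes\overline{\Q}_\ell$ through a minuscule weight; a Lie-theoretic classification of such pairs $(\mathfrak{g},V)$ inside $\mathfrak{gsp}_{2d}$ then forces $\mathfrak{g}_\ell=\mathfrak{gsp}_{2d}$ exactly when $d$ is odd or $d\in\{2,6\}$. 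He then descends from openness of the $\ell$-adic image in $\GSp_{2d}(\Z_\ell)$ to surjectivity mod $\ell$ for $\ell\gg 0$ by a separate group-theoretic reduction argument. Your route---going straight to maximal subgroups of $\Sp_{2d}(\F_\ell)$---is closer in spirit to later work of Hall, Zywina, and Arias-de-Reyna--Dieulefait--Wiese than to Serre's original.

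Two places in your sketch need real repair. First, the passage from ``for infinitely many $\ell$ the mod-$\ell$ image lies in Aschbacher class $\mathcal{C}_i$'' to ``$V_\ell(A)$ has the corresponding $\ell$-adic structure'' is not automatic; you invoke Faltings, but Faltings' theorems concern $\End_{G_\Q}(V_\ell(A))$ and the isogeny category, and it takes a genuine argument (typically via limits over $\ell$ and the independence-of-$\ell$ of Frobenius data) to promote a mod-$\ell$ stabilized subspace, decomposition, or form to a characteristic-zero one. Second, the field-extension class $\mathcal{C}_3$ does not deliver a new endomorphism as directly as you suggest: the image is only \emph{semilinear} over the extension field $\F_{\ell^r}$, so $G_\Q$ need not centralize $\F_{\ell^r}^\times$; one must first pass to the finite-index subgroup acting $\F_{\ell^r}$-linearly and control the corresponding number field before Faltings gives extra endomorphisms. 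On the other hand, your diagnosis that the tensor classes $\mathcal{C}_4,\mathcal{C}_7$ are the technical heart, that they are not excluded by $\End_{\overline{\Q}}(A)=\Z$ alone, and that they are exactly where the restriction on $d$ enters (with Mumford's $d=4$ example sitting on the excluded side) is correct and matches the content of Serre's Lie-algebraic step---but ``a Mumford--Tate and Lie-algebraic analysis'' is where the theorem lives, and as written it is a placeholder rather than an argument.
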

For explicit (though large) estimates for the constant $B_A$ see \cite{Lombardo}.
The conclusion of the theorem is known to be false for general $d$; a counterexample
is constructed by Mumford \cite{Mumford69} for $d=4$.
The following is a tantalizing open question: 
\emph{given $d$ as in the theorem, is there a uniform bound $B_d$ depending only
on $d$, such that for all principally polarized abelian 
varieties $A$ 
over $\Q$ of dimension $d$ with $\End_{\overline{\Q}}(A)=\Z$,
 and all $\ell>B_d$,
the representation $\overline{\rho}_{A,\ell}$ is surjective?}
For elliptic curves an affirmative answer is expected,
and this is known as Serre's Uniformity Question, which
is still an open problem. Serre's Uniformity Question is
much easier for semistable elliptic curves. Indeed,
Serre \cite[Proposition 21]{serre72} shows that if $E/\Q$
is a semistable elliptic curve, and $\ell \ge 7$ is prime,
then $\overline{\rho}_{E,\ell}$ is either surjective or
reducible. It immediately follows 
%\cite[Theorem 4]{Mazur78}
from Mazur's classification \cite{Mazur78}
of isogenies of elliptic curves over $\Q$ that $\overline{\rho}_{E,\ell}$
is surjective for $\ell \ge 11$. 
It is natural to ask
if a result similar to Serre's \cite[Proposition 21]{serre72} 
can be established for semistable principally
polarized abelian varieties. For now, efforts to prove such
a theorem are hampered by the absence of a satisfactory
classification of maximal subgroups of $\GSp_{2d}(\F_\ell)$ (indeed,
Serre's result for semistable elliptic curves makes
use of Dickson's classification of 
maximal subgroups of $\GL_2(\F_\ell)=\GSp_2(\F_\ell)$).
There is however a beautiful classification
due to Arias-de-Reyna, Dieulefait and Wiese (see Theorem~\ref{thm:ADW}
below)
of
subgroups of $\GSp_{2d}(\F_\ell)$ containing a transvection.
A \emph{transvection} is a unipotent element $\sigma$ such
$\sigma-I$ has rank $1$.
The main result of our paper, building on the classification
of Arias-de-Reyna, Dieulefait and Wiese, is the following theorem.
\begin{thm}\label{thm:semistable}
Let $A$ be a semistable principally polarized
abelian variety of dimension $d \ge 1$ over $\Q$ and let $\ell \ge \max(5,d+2)$ 
be prime. Suppose the image of
$\overline{\rho}_{A,\ell} \; \colon \; G_\Q \rightarrow  \GSp_{2d}(\F_\ell)$
contains a transvection. Then $\overline{\rho}_{A,\ell}$ is either
reducible or surjective.
\end{thm}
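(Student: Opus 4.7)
The plan is to apply the classification of Arias-de-Reyna, Dieulefait and Wiese (Theorem~\ref{thm:ADW}) to $G := \Img(\overline{\rho}_{A,\ell}) \subset \GSp_{2d}(\F_\ell)$. Since $G$ contains a transvection, that classification forces $G$ into one of three families: $G$ stabilizes a proper nonzero subspace (so $\overline{\rho}_{A,\ell}$ is reducible, one of the desired conclusions); $G$ is imprimitive, preserving a nontrivial direct-sum decomposition $A[\ell] = V_1 \oplus \cdots \oplus V_k$ whose $k \ge 2$ blocks are transitively permuted; or $G \supseteq \Sp_{2d}(\F_\ell)$. In the last case the symplectic multiplier on $G$ coincides with the (surjective) mod-$\ell$ cyclotomic character, so $G = \GSp_{2d}(\F_\ell)$. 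The heart of the proof is therefore to show that the imprimitive case in fact forces reducibility.

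Write $m := 2d/k$ for the common dimension of the blocks. A short linear algebra argument eliminates the monomial subcase $m = 1$ (equivalently $k = 2d$): a transvection $\sigma \in G$ cannot fix every one-dimensional block (any unipotent diagonal matrix is the identity), and if $\sigma$ acts as a cycle of length $r \ge 2$ on some blocks, then $\rank(\sigma - I) \ge r - 1$, so rank $1$ forces $r = 2$, whereupon one computes that $\sigma^2 = I$, hence $\sigma = I$ because $\ell \ge 5$ is odd. In the remaining range $2 \le k \le d$ the same rank bound shows that every transvection in $G$ must fix each block individually: if $\sigma$ sent $V_i$ to a different block $V_j$, then $\sigma|_{V_i}\colon V_i \to V_j$ would be an isomorphism of $m$-dimensional spaces, forcing $\rank(\sigma - I) \ge m \ge 2$.

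Next consider the permutation representation $\varphi \colon G_\Q \to S_k$ on the block set. At any good prime of $A$, $\overline{\rho}_{A,\ell}$ (and hence $\varphi$) is unramified. At a bad prime $p \ne \ell$, semistability of $A$ implies (Grothendieck) that $I_p$ acts unipotently on $A[\ell]$, so $\varphi(I_p)$ consists of $\ell$-power order elements of $S_k$; since $\ell \ge d+2 > k$ we have $\ell \nmid k!$, so $S_k$ has no nontrivial elements of $\ell$-power order and $\varphi(I_p) = 1$. The same divisibility shows that $\varphi(I_\ell)$ has order prime to $\ell$. Hence $\varphi$ cuts out a number field $M/\Q$ unramified outside $\ell$, tamely ramified at $\ell$, with $[M:\Q]$ dividing $k!$. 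If one can show $M = \Q$, then $\varphi$ is trivial, each $V_i$ is $G_\Q$-stable, and $\overline{\rho}_{A,\ell}$ is reducible, completing the proof.

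The main obstacle is this final step, proving $M = \Q$. My plan is to exploit semistability of $A$ at $\ell$: Grothendieck's monodromy filtration combined with Raynaud's theorem on finite flat group schemes constrains the tame inertia characters appearing in $\overline{\rho}_{A,\ell}|_{I_\ell}$, which come from the toric (cyclotomic), abelian, and étale (trivial) pieces of the Néron model at $\ell$. Translating these character-level constraints into the claim that $I_\ell$ cannot permute the blocks $V_i$ nontrivially — by isolating the $I_\ell$-isotypic components of $A[\ell]$ and observing that any nontrivial permutation is incompatible with their rigid structure — would give $\varphi(I_\ell) = 1$. Combined with what has already been shown, this makes $M/\Q$ unramified everywhere, hence trivial by Minkowski, as required.
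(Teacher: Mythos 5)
Your outline matches the paper's: apply the Arias-de-Reyna--Dieulefait--Wiese classification, reduce to eliminating the imprimitive case (ii), and show the permutation representation $\varphi \colon G_\Q \to S_k$ cuts out a number field unramified at every finite place, hence trivial by Hermite/Minkowski. Your handling of primes $p \ne \ell$ and of wild inertia at $\ell$ is exactly the paper's argument and is correct.

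However, the step you explicitly flag as "the main obstacle" — proving $\varphi(I_\ell) = 1$ on tame inertia — is precisely the heart of the theorem, and you give only a vague plan ("isolating the $I_\ell$-isotypic components\ldots observing that any nontrivial permutation is incompatible with their rigid structure"). This is not a proof, and it is not clear the plan works as stated: $I_\ell$ need not act semisimply on $A[\ell]$, and the blocks $V_i$ need not be $I_\ell$-stable (that is the whole point to be ruled out), so "isotypic components" do not directly speak to whether a tame generator can permute the $V_i$. What the paper actually does is concrete and different in mechanism: if a topological generator $\sigma$ of tame inertia cyclically permutes $r$ of the blocks, then (Lemma~\ref{lem:cyclic}) its eigenvalues on that cycle come in orbits under multiplication by a primitive $r$-th root of unity $\zeta$; Raynaud's theorem forces every eigenvalue to be $\psi_n(\sigma)^{\sum a_i\ell^i}$ with $a_i \in \{0,1\}$, and a short arithmetic argument — this is exactly where the hypothesis $\ell \ge d+2$ is used — shows two such eigenvalues can differ by a nontrivial $r$-th root of unity with $r \le d$ only if $r = 1$. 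Your sketch never explains where $\ell \ge d+2$ enters, which is a sign the crucial mechanism is missing. Separately, the preliminary paragraph about transvections necessarily fixing each block (and the elimination of the $m=1$ monomial case) is unnecessary: the ADW theorem already hands you the decomposition with $2m < 2d$ and a transitive permutation action, and nothing in the subsequent argument requires knowing how transvections interact with the blocks.
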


The proof of Theorem~\ref{thm:semistable} is given in Section~\ref{sec:semiproof}.
We mention a well-known pair of sufficient conditions (see for example
\cite[Section 2]{Hall11}) for 
the image of $\overline{\rho}_{A,\ell}$
to contain a transvection.
%A sufficient condition for the image of $\overline{\rho}_{A,\ell}$
%to contain a transvection is the existence of a prime
Let $q \ne \ell$ be a prime and suppose that the following two conditions are satisfied:
\begin{itemize}
\item The special fibre of the N\'{e}ron
model for $A$ at $q$ has toric dimension $1$; 
\item 
$\ell \nmid \#\Phi_q$,
where $\Phi_q$ is the group of connected components of the special
fibre of the N\'{e}ron model at $q$. 
\end{itemize} 
Then the image of $\overline{\rho}_{A,\ell}$ contains
a transvection.
Now let $C/\Q$ be a hyperelliptic
curve of genus $d$, given by a model $y^2=f(x)$ where $f \in \Z[x]$
is a squarefree polynomial. Let $p$ be an odd prime not dividing
the leading coefficient of $f$ such that $f$ modulo $p$
has one root in $\overline{\F}_p$ having multiplicity precisely $2$,
with all other roots simple.
Then the N\'{e}ron model for the Jacobian $J(C)$ (which is a principally polarized
abelian variety) at $p$ has toric dimension $1$.

\bigskip

The remainder of the paper is concerned with semistable
principally polarized abelian threefolds $A/\Q$
which
possess a prime $q$ such that the special fibre of the N\'{e}ron
model for $A$ at $q$ has toric dimension $1$.
Building on Theorem~\ref{thm:semistable}, we give in
Section~\ref{sec:bound} a  practical method
which should in most cases produce an explicit (and small) bound $B$ (depending on $A$)
such that for $\ell \ge B$, the representation $\overline{\rho}_{A,\ell}$ 
is surjective. This method is inspired by the paper of 
Dieulefait \cite{dieulefait4}
which solves the corresponding problem for abelian surfaces. 
Our method is not always guaranteed to succeed, but we expect it to succeed
if $\End_{\overline{\Q}}(A) = \Z$.
%Indeed, if the $A$ is split over $\Q$, meaning that there are abelian
%varieties $A_1$, $A_2$ defined over $\Q$ such that $A$ is isogenous to $A_1 \times A_2$,
%then for $\ell$ not dividing the degree of the isogeny,
%$A[\ell]$ is isomorphic to $A_1[\ell]\times A_2[\ell]$ as a $G_\Q$-module
% and hence $\overline{\rho}_{A,\ell}$ is reducible. 

\bigskip 

It is well known that $\GSp_{2d}(\F_\ell)$ is a Galois group over
 $\Q$ for all $d \ge 1$ and all primes $\ell\ge 3$
(see for example \cite[Remark 2.5]{Sara14}). The proof of this statement in fact shows the existence
of a genus $d$ hyperelliptic curve $C/\Q$ such that $\overline{\rho}_{J,\ell}$ is surjective,
where $J$ is the Jacobian of $C$.
The argument however relies on Hilbert's Irreducibility Theorem, and so does not produce
an explicit equation for $C$. In \cite{Sara14}, a genus $3$ hyperelliptic curve $C/\Q$ is given
so that $\overline{\rho}_{J,\ell}$ is surjective for all $11 \le \ell < 5 \times 10^5$.
As a corollary to our method for producing the bound $B$ mentioned
above, we prove the following.
\begin{cor}\label{cor:surjective}
Let $C/\Q$ be the following genus $3$ hyperelliptic curve,
\begin{equation}\label{eqn:gen3}
C \; : \;
y^2 + (x^4 + x^3 + x + 1)y = x^6 + x^5 .
\end{equation}
and write $J$ for its Jacobian.
Let $\ell \ge 3$ be a prime. Then $\overline{\rho}_{J,\ell}(G_\Q)=\GSp_6(\F_\ell)$.
\end{cor}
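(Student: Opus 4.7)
The plan is to apply Theorem~\ref{thm:semistable} to $J$, supplement it with an irreducibility check, and then treat the small prime $\ell=3$ separately. First, I would verify that $C$ (and hence $J$) is semistable over $\Q$ by computing the discriminant of the affine model $y^2+hy=g$ with $h=x^4+x^3+x+1$ and $g=x^6+x^5$, and checking the reduction type at each bad prime. Next, I would search for a prime $q$ of bad reduction such that the polynomial $f(x):=h(x)^2+4g(x)$, modulo $q$, has a unique double root and all other roots simple; by the N\'eron-model criterion quoted in the introduction, the toric dimension of $J$ at $q$ is then $1$. Computing $\#\Phi_q$ pins down the finite list of primes $\ell$ for which the transvection hypothesis might fail.

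For every prime $\ell\ge 5$ with $\ell\ne q$ and $\ell\nmid\#\Phi_q$, the image of $\overline{\rho}_{J,\ell}$ contains a transvection, so Theorem~\ref{thm:semistable} reduces surjectivity to \emph{irreducibility}. I would then invoke the method of Section~\ref{sec:bound} (the main technical content the paper is building toward) to produce a small explicit bound $B$ beyond which reducibility is impossible. For each $\ell<B$ that remains, I would compute the characteristic polynomial of $\Frob_p$ on $J$ for a handful of good primes $p$, reduce modulo $\ell$, and verify that no consistent invariant $\F_\ell$-subspace of dimension $1$, $2$ or $3$ can exist; the Weil-pairing symmetry pairs dimensions $k$ and $6-k$, cutting the casework roughly in half. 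This eliminates reducibility for all $\ell\ge 5$ outside an explicit finite exceptional set.

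The remaining cases, namely $\ell=3$ together with any exceptional prime arising from $\#\Phi_q$, must be handled by direct computation. Here I would compute Frobenius conjugacy classes at enough good primes $p$, reduce them modulo $\ell$, and check that the subgroup of $\GSp_6(\F_\ell)$ they generate is the whole group by ruling out each maximal subgroup in turn. For $\ell=3$ this is a finite group-theoretic verification carried out in a computer algebra system: one uses a transvection (from the chosen $q$) together with a well-chosen Frobenius class to exclude every maximal subgroup of $\GSp_6(\F_3)$ containing a transvection.

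The principal obstacle is ruling out reducibility for small-to-moderate primes $\ell$. Unlike the elliptic-curve setting, there is no analogue of Mazur's isogeny theorem for abelian threefolds, so irreducibility must be obtained by combining the abstract bound from Section~\ref{sec:bound} with explicit Euler-factor calculations at enough good primes to exclude each possible isomorphism class of semisimplification. Executing this uniformly across all $\ell\ge 3$ is the crux of the argument.
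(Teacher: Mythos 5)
Your proposal matches the paper's strategy closely: verify semistability and the toric-dimension-$1$ criterion at a bad prime, apply Theorem~\ref{thm:semistable} to reduce surjectivity to irreducibility for $\ell \ge 5$, eliminate reducibility via the machinery of Section~\ref{sec:bound} (which in the paper yields a bound that is a power of $2$, so no odd primes remain), and finish $\ell=3$ by a direct computation in $\GSp_6(\F_3)$. The only slight differences are that the paper exploits both bad primes $q=3$ and $q=2969$ (each with $\#\Phi_q=1$) so that a transvection is present for every $\ell \ge 3$ simultaneously, and for $\ell=3$ it rules out proper subgroups by comparing orders coming from characteristic polynomials of Frobenius rather than by enumerating maximal subgroups containing a transvection.
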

Recently (and independently), Zywina \cite{Zywina} 
gives a genus $3$ plane quartic curve over $\Q$ for which 
he proves that the 
Galois action on the torsion subgroup of the Jacobian $J$
is maximal (in other words,
$\rho_J \colon G_\Q \rightarrow \GSp_6(\hat{\Z})$ 
is surjective).

We are grateful to Tim Dokchitser for providing us with a list
of genus $3$ hyperelliptic curves with small Jacobian conductors, from which
we chose the curve $C$ in Corollary~\ref{cor:surjective}. We thank Jeroen Sijsling
for helpful remarks on an earlier version of this paper.
We are indebted to the referees for their careful reading of the paper
and for many helpful remarks.

\section{Proof of Theorem~\ref{thm:semistable}}\label{sec:semiproof}

We shall make use of the following classification of subgroups of 
$\GSp_{2d}(\F_\ell)$ containing a transvection, due to 
Arias-de-Reyna, Dieulefait and Wiese.
\begin{thm}[Arias-de-Reyna, Dieulefait and Wiese, {\cite{disawi}}]\label{thm:ADW}
Let $\ell \ge 5$ be a prime and let
$V$ a symplectic $\F_\ell$-vector space of dimension $2d$.
Any subgroup $G$ of $\GSp (V)$ which contains
a transvection satisfies one 
of the following assertions:
\begin{enumerate}
\item[(i)] There is a non-trivial proper $G$-stable subspace $W \subset V$.
\item[(ii)] There are non-singular symplectic subspaces $V_i \subset V$
with $i=1,\dotsc,h$, of dimension $2m<2d$ 
and a homomorphism $\phi \; \colon \; G \rightarrow S_h$
such that $V=\oplus_{i=1}^h V_i$ and $\sigma(V_i)=V_{\phi(\sigma)(i)}$
for $\sigma \in G$ and $1 \le i \le h$. Moreover, 
$\phi(G)$ is a transitive subgroup of $S_h$. 
\item[(iii)] $\Sp(V) \subseteq G$.
\end{enumerate}
\end{thm}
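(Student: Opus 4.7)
The plan is to prove the trichotomy by first applying Clifford theory to the normal subgroup of $G$ generated by its transvections, and then invoking McLaughlin's classification of irreducible linear groups generated by transvections to handle the primitive case. We assume throughout that (i) fails, so $V$ is irreducible as a $G$-module, and aim to show that (ii) or (iii) must hold.

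Let $\mathcal{T} \subseteq G$ be the set of transvections in $G$ and let $N$ be the normal subgroup of $G$ they generate (equivalently, the subgroup generated by the $G$-orbit of any single transvection in $G$). By hypothesis $N \neq 1$, and since every transvection in $\GSp(V)$ has similitude multiplier $1$, we have $N \subseteq \Sp(V)$. Apply Clifford's theorem to $N \trianglelefteq G$: decompose $V = V_1 \oplus \cdots \oplus V_h$ into its $N$-isotypic components, which $G$ permutes transitively, yielding a homomorphism $\phi \colon G \to S_h$ with transitive image. The $G$-invariance of the symplectic form $\langle\,,\,\rangle$, combined with Schur's lemma applied to the $N$-equivariant maps $V_i \to V_j^{*}$ induced by $\langle\,,\,\rangle$, shows that each $V_i$ pairs non-trivially with exactly one $V_j$. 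If some $V_i$ pairs with a distinct $V_j$ rather than with itself, then grouping the components into such dual pairs produces a decomposition of $V$ into totally isotropic $G$-stable pieces; collecting one piece from each pair yields a proper $G$-invariant subspace, contradicting irreducibility. Hence each $V_i$ is non-degenerate with respect to $\langle\,,\,\rangle$, of common dimension $2m$ with $hm = d$.

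If $h > 1$, then $m < d$ and we are in case (ii) with the permutation action furnished by $\phi$. If $h = 1$, then $N$ acts irreducibly on $V$, and we invoke McLaughlin's classification of irreducible subgroups of $\GL(V)$ generated by transvections. For $\ell \ge 5$, the possibilities contained in $\Sp(V)$ are $\Sp(V)$ itself together with a short list of exceptional candidates: subgroups of type $\SL$ or $\SU$ preserving an extra linear or Hermitian structure, and symmetric or alternating groups acting via the deleted permutation representation. Since $\F_\ell$ is prime and $N$ preserves the given $\F_\ell$-symplectic form on the full $2d$-dimensional space, the subfield and unitary possibilities are excluded, while the symmetric-group candidates fail to land inside $\Sp_{2d}(\F_\ell)$ for $\ell \ge 5$ and the given dimension. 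One concludes $N = \Sp(V)$, and hence $\Sp(V) \subseteq G$, placing us in case (iii).

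The principal obstacles will be twofold. The harder of the two is the symplectic geometry in the Clifford step: showing rigorously that if two distinct isotypic components are paired by the form then $G$ must admit a proper invariant subspace, and handling the interaction between the transitive $G$-action on the $V_i$ and the pairing pattern, requires a careful case analysis of how $\phi(G)$ can act on the set of dual pairs. The second obstacle is the invocation of McLaughlin's theorem itself: eliminating every exceptional family on the list, and verifying that the hypothesis $\ell \ge 5$ is precisely what is needed to do so, involves checking each low-dimensional sporadic embedding individually.
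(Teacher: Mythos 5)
First, a point of order: the paper does not prove this statement. Theorem~\ref{thm:ADW} is quoted from Arias-de-Reyna, Dieulefait and Wiese \cite{disawi} and used as a black box, so there is no in-paper proof to compare against. Your outline does, in fact, follow the strategy of the original proof in \cite{disawi}: Clifford theory applied to the normal closure $N$ of the transvections, followed by the classification (Zalesskii--Serezhkin/Kantor, building on McLaughlin) of irreducible linear groups generated by transvections, which over the prime field $\F_\ell$ with $\ell \ge 5$ and preserving the given symplectic form leaves only $\Sp(V)$. However, the step you yourself flag as the harder one is argued incorrectly, and as written it cannot be repaired without a new idea.

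The gap is the elimination of the ``dual pairs'' configuration. If each $V_i$ pairs trivially with itself and non-trivially with a distinct partner $V_{j(i)}$, then the $V_i$ are totally isotropic but they are \emph{not} $G$-stable ($G$ permutes them transitively), and the sum of one member chosen from each dual pair is not $G$-invariant either, since $G$ may swap the two members of a pair. So ``collecting one piece from each pair'' does not contradict irreducibility. Moreover, no argument at this stage that ignores the transvection can succeed: the stabilizer in $\Sp(V)$ of an unordered pair of complementary Lagrangians $\{L, L'\}$ (isomorphic to $\GL(L)\rtimes C_2$) is an irreducible subgroup exhibiting exactly this configuration. The correct argument is to use the transvection here: every transvection $T$ of $G$ lies in $N$, hence in $\Ker(\phi)$, hence stabilizes each $V_i$; since $T-I$ has rank $1$, its direction $u$ lies in a single component $V_{i_0}$ and $T$ is trivial on all others; the formula $T(v)=v+\lambda\langle v,u\rangle u$ then forces $\langle \cdot, u\rangle$ to be non-zero on $V_{i_0}$ while $u\in V_{i_0}$, so $V_{i_0}$ is not totally isotropic, and transitivity together with the uniqueness of the pairing partner gives non-degeneracy of every $V_i$. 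A second, smaller, omission: when $h=1$ you assert that $N$ acts irreducibly, but Clifford theory only gives that $V$ is $N$-isotypic, say $V\cong S^{k}$; you need the additional observation that a transvection would act as $T_S\otimes 1$ on $S\otimes E^{k}$ (with $E=\End_N(S)$) and hence have $\mathrm{rank}(T-I)\ge k$, forcing $k=1$, before the classification of irreducible transvection-generated groups can be invoked.
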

We shall apply Theorem~\ref{thm:ADW} to $G=\overline{\rho}_{A,\ell}(G_\Q)$
where $A$ and $\ell$ are as in the statement of Theorem~\ref{thm:semistable}.
It follows from the surjectivity of the mod $\ell$ cyclotomic character 
$\chi \colon G_\Q \rightarrow \F_\ell^\times$
that if $\Sp_{2d}(\F_\ell) \subseteq G$ then $G=\GSp_{2d}(\F_\ell)$.

\bigskip

Throughout we write $I_\ell$ for the inertia at $\ell$
subgroup of $G_\Q$.
We shall also make use of the following theorem of Raynaud.
\begin{thm}[Raynaud, {\cite{Raynaud}}]
Let $A$ be an abelian variety over $\Q$.
Let $\ell$ be a prime of semistable reduction
for $A$.
Regard $A[\ell]$
as an $I_\ell$-module and let $V$ be a Jordan-H\"{o}lder
factor of dimension $n$ over $\F_\ell$. 
Let $\psi_n \; \colon \; I_\ell \rightarrow \F_{\ell^n}^\times$
be a fundamental character of level $n$.
Then $V$
has the structure of a $1$-dimensional $\F_{\ell^n}$-vector space
and the action of $I_\ell$ on it is given by a character 
$\varpi \; \colon \; I_\ell \rightarrow \F_{\ell^n}^\times$,
where $\varpi=\psi_n^{\sum_{i=0}^{n-1} a_i \ell^i}$
with $a_i=0$ or $1$.
\end{thm}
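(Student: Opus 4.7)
My plan would be to reduce the theorem to Raynaud's classification of finite flat commutative $\ell$-torsion group schemes over a DVR of mixed characteristic $(0,\ell)$, and then assemble the conclusion by analyzing the standard three-step $I_\ell$-stable filtration of $A[\ell]$ coming from semistable reduction.

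Since the statement concerns only the inertia action, I would first localize at $\ell$, working over $\Q_\ell$ with ring of integers $\Z_\ell$ of absolute ramification index $e=1$; this is well within the range $e \le \ell-1$ in which Raynaud's theorem applies. The semistability hypothesis means that the connected component of the special fibre of the N\'{e}ron model $\mathcal{A}/\Z_\ell$ is semi-abelian, an extension of an abelian variety by a torus of some dimension $t$. Combining the connected--\'{e}tale decomposition of the $\ell$-divisible group of $\mathcal{A}$ with this toric--abelian structure yields an $I_\ell$-stable filtration
\[
0 \;\subseteq\; W_t \;\subseteq\; W_f \;\subseteq\; A[\ell]
\]
in which $W_t \cong \mu_\ell^t$ is the toric piece, $W_f/W_t$ is the $\ell$-torsion of an abelian scheme over $\Z_\ell$ lifting the abelian quotient, and $A[\ell]/W_f$ is unramified (\'{e}tale part). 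Since Jordan--H\"{o}lder factors of an extension are the union of those of the subquotients, it suffices to verify the claim on each piece separately.

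The toric and \'{e}tale pieces are immediate: every Jordan--H\"{o}lder factor of $W_t$ is isomorphic to $\mu_\ell$, on which $I_\ell$ acts through $\chi = \psi_1$ (case $n=1$, $a_0=1$), and every Jordan--H\"{o}lder factor of $A[\ell]/W_f$ is a trivial $\F_\ell$-line, corresponding to $\psi_1^0$ (case $n=1$, $a_0=0$). The middle piece $W_f/W_t$ is the generic fibre of a finite flat $\ell$-torsion commutative group scheme $\mathcal{G}/\Z_\ell$. To this I would apply Raynaud's main theorem, which classifies the simple such group schemes via linear-algebraic data and identifies the $I_\ell$-action on each simple generic fibre of $\F_\ell$-dimension $n$ with a character of the form $\psi_n^{\sum_{i=0}^{n-1} a_i \ell^i}$ with $a_i \in \{0,1\}$; the $\F_{\ell^n}$-structure on $V$ comes out of the same classification.

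The bulk of the work is hidden inside Raynaud's structure theorem, which I expect to be the main obstacle. Its proof introduces a category of linear-algebraic objects --- Raynaud modules, roughly an $\F_{\ell^n}$-vector space equipped with a pair of semilinear operators composing to zero in either order --- establishes an equivalence of this category with the category of finite flat $\ell$-torsion commutative group schemes of the given type over $\Z_\ell$, and then reads off the generic-fibre $I_\ell$-action directly from the data. Granted this classification as a black box, our statement reduces to the bookkeeping above and no essentially new ingredient is required.
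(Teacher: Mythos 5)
The paper offers no proof of this statement: it is quoted verbatim as Raynaud's theorem, so there is nothing internal to compare against. Your reduction is, however, exactly the standard derivation that lies behind the citation: use the $G_{\Q_\ell}$-stable (hence $I_\ell$-stable) filtration of $A[\ell]$ coming from Grothendieck's semistable reduction theory (SGA~7, exp.~IX) --- toric part, ``fixed'' (finite flat) part, \'etale quotient --- observe that Jordan--H\"older factors of $A[\ell]$ are those of the graded pieces, dispose of the outer pieces by hand ($\psi_1^1$ and $\psi_1^0$), and feed the middle piece to Raynaud's classification over $\Z_\ell$, where $e=1\le \ell-1$ forces the exponents $a_i$ into $\{0,1\}$ and supplies the $\F_{\ell^n}$-structure. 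One imprecision worth correcting: the middle graded piece is \emph{not} in general the $\ell$-torsion of an abelian scheme over $\Z_\ell$ lifting the abelian quotient of the special fibre (no such lift need exist); what is true, and is all you use, is that the fixed part $W_f$ is the generic fibre of the maximal finite flat subgroup scheme of $\mathcal{A}[\ell]$, so that $W_f/W_t$ is the generic fibre of a finite flat $\ell$-torsion group scheme. You should also note explicitly that passing from a Jordan--H\"older factor of the Galois module to a simple finite flat group scheme is done by scheme-theoretic closure (closed flat subgroup schemes of a finite flat $\mathcal{G}/\Z_\ell$ correspond bijectively to Galois submodules of $\mathcal{G}_{\Q_\ell}$), after which Raynaud's Corollaire~3.4.4 applies directly. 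With those two touch-ups the argument is complete, granting Raynaud's structure theorem as the black box it is universally treated as.
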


We shall make use of the following elementary lemma in the proof 
of Theorem~\ref{thm:semistable}.
\begin{lem}\label{lem:cyclic}
Let $k$ be an algebraically closed field and $V \ne 0$ be a finite dimensional
vector space over $k$. Let $T \colon V \rightarrow V$ be a $k$-linear map,
and suppose $V=\oplus_{i=1}^{r} V_i$ where $T(V_i)=V_{i+1}$ (the indices
considered modulo $r$). Let $\alpha$ be an eigenvalue of $T$.
Then $\zeta \alpha$ is also an eigenvalue of $T$ for every $\zeta$
in $k$ satisfying $\zeta^r=1$.
\end{lem}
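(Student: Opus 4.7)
The plan is to exploit the cyclic structure directly by writing a given eigenvector in components, extracting a recurrence, and then twisting by $r$-th roots of unity to produce the new eigenvectors.

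First I would handle the trivial case $\alpha=0$ separately: then $\zeta\alpha=0=\alpha$ is already an eigenvalue, so there is nothing to prove. So assume $\alpha\neq 0$. I would next observe that, because each restriction $T\vert_{V_i}\colon V_i\to V_{i+1}$ is surjective by hypothesis, chaining the inequalities $\dim V_{i+1}\le \dim V_i$ around the cycle forces $\dim V_1=\dim V_2=\cdots=\dim V_r$, and therefore each $T\vert_{V_i}$ is an isomorphism $V_i\xrightarrow{\sim}V_{i+1}$.

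Next I would take an eigenvector $v\in V$ with $Tv=\alpha v$ and decompose it uniquely as $v=v_1+\cdots+v_r$ with $v_i\in V_i$. Comparing $V_{i+1}$-components (indices mod $r$) in the equation $Tv=\alpha v$ gives the recurrence
\[
T v_i \;=\; \alpha\, v_{i+1}, \qquad i=1,\dots,r.
\]
Since $\alpha\neq 0$ and $T\vert_{V_i}$ is an isomorphism, $v_i=0$ for some $i$ would force $v_{i+1}=0$ and, going around the cycle, $v=0$, contradicting $v\neq 0$. Hence every $v_i$ is nonzero.

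Finally, given $\zeta\in k$ with $\zeta^r=1$, I would simply define
\[
w \;=\; \sum_{i=1}^{r} \zeta^{-i}\, v_i,
\]
which lies in $V$ and is nonzero (for instance its $V_r$-component is $\zeta^{-r}v_r=v_r\neq 0$). Using the recurrence and reindexing $j=i+1$,
\[
Tw \;=\; \sum_{i=1}^r \zeta^{-i}\,Tv_i \;=\; \alpha\sum_{i=1}^r \zeta^{-i} v_{i+1} \;=\; \alpha\sum_{j=1}^r \zeta^{-(j-1)} v_j \;=\; (\zeta\alpha)\, w,
\]
so $\zeta\alpha$ is an eigenvalue of $T$. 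There is really no obstacle here; the only small point to be careful about is verifying that $T$ restricts to isomorphisms between the $V_i$ so that the recurrence propagates non-vanishing, and observing that the relation $\zeta^r=1$ is exactly what makes the twisting coefficients $\zeta^{-i}$ well-defined modulo $r$.
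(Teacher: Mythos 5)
Your proof is correct and follows essentially the same route as the paper: decompose the eigenvector as $v=\sum v_i$, extract the recurrence $Tv_i=\alpha v_{i+1}$, and twist to $w=\sum\zeta^{-i}v_i$. The only difference is that you explicitly verify $w\neq 0$, which the paper omits; your verification is a bit heavier than needed (no case split on $\alpha$ or isomorphism argument is required, since $v\neq 0$ already gives some $v_{i_0}\neq 0$, and then the $V_{i_0}$-component $\zeta^{-i_0}v_{i_0}$ of $w$ is nonzero).
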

\begin{proof}
Let $v$ be an eigenvector corresponding to $\alpha$ and write
$v=\sum_{i=1}^r v_i$ with $v_i \in V_i$. Then
$T(v_i)=\alpha v_{i+1}$. Let $v^\prime=\sum_{i=1}^r \zeta^{-i} v_i$.
Then $T (v^\prime)=\sum_{i=1}^r \zeta^{-i} \alpha v_{i+1}=
\zeta \alpha \sum_{i=1}^r \zeta^{-i-1} v_{i+1}=\zeta \alpha v^\prime$
showing that $\zeta \alpha$ is indeed an eigenvalue.
\end{proof}

\bigskip

\begin{proof}[Proof of Theorem~\ref{thm:semistable}]
Denote $\overline{\rho}=\overline{\rho}_{A,\ell}$.
Let $G=\overline{\rho}(G_\Q) \subseteq \GSp_{2d}(\F_\ell)$ and consider
the action of $G$ on the symplectic vector space $V=A[\ell]$. Since $G$
contains a transvection we may apply Theorem~\ref{thm:ADW}.
To prove the theorem, it is sufficient to show that case (ii) of Theorem~\ref{thm:ADW} 
does not arise. Suppose otherwise. 
Then we can write $V=\oplus_{i=1}^h V_i$ where
$V_i$ are non-singular symplectic subspaces of dimension $2m<2d$,
 and there is some
$\phi \; \colon \; G \rightarrow S_h$ with transitive image
such that $\sigma(V_i)=V_{\phi(\sigma)(i)}$. Let $\pi=\phi \circ \overline{\rho} \colon G_\Q \rightarrow S_h$.
Let $H=\Ker(\pi)$. Then $H=G_K$ for some number field $K/\Q$.
Moreover, $\overline{\rho}\vert_{G_K}$ is reducible as the $V_i$ are
stable under the action of $G_K$. We shall
show that the extension $K/\Q$ is unramified at the finite places,
and thus $K$ has discriminant $\pm 1$.
It then follows by a famous theorem of Hermite that $K=\Q$, showing that $\pi$ is trivial
and contradicting the fact that $\phi(G)=\pi(G_\Q)$ is transitive.

First let $p \ne \ell$ be a prime. As $A$ is semistable,
$I_p$ acts unipotently on $V$. Thus $\overline{\rho}(\sigma)$
has $\ell$-power order for $\sigma \in I_p$. However, the
order of $\overline{\rho}(\sigma)$ is divisible by the order
of $\pi(\sigma)$ which in turn divides $h!$. As $h=2d/2m \le d<\ell$,
we see that $\pi(\sigma)=1$. Thus $K/\Q$ is unramified at $p$.

Next, consider $\sigma \in I_\ell^w$, the wild subgroup of $I_\ell$.
As $I_\ell^w$ is a pro-$\ell$ group,
 $\overline{\rho}(\sigma)$ has $\ell$-power order, and we see that
$\pi(\sigma)=1$ as above. Finally, let $\sigma \in I_\ell$
be an element whose image in the tame inertia group $I_\ell^t=I_\ell/I_\ell^w$
is a topological generator. Reorder $V_1,\dotsc,V_h$ so that $\sigma(V_i)=V_{i+1}$ 
for $i=1,\dotsc,r-1$ and $\sigma(V_r)=V_1$. 
Write $\overline{V}=V \otimes \overline{\F}_{\ell}$ and likewise
define $\overline{V}_i$.
Let $\overline{W}=\oplus_{i=1}^r \overline{V}_i$.
It follows that $\overline{W}$ is stable under the action of $I_\ell$. 
Let $\alpha_1 \in \overline{\F}_\ell$ be an eigenvalue for $\sigma$
acting on $\overline{W}$. By Lemma~\ref{lem:cyclic},
we know that $\alpha_2=\zeta \alpha_1$ is also an eigenvalue for $\sigma$
acting on $\overline{W}$, where $\zeta \in \overline{\F}_{\ell}$
is a primitive $r$-th root of unity (observe that this exists
as $r \le h \le d<\ell$). By Raynaud's Theorem, there exist
$n_1$, $n_2$ and characters 
$\varpi_j \; \colon \; I_\ell \rightarrow \F_{\ell^{n_j}}^\times$ such
that $\alpha_j=\varpi_j(\sigma)$. As $\sigma$ is a topological
generator for the tame inertia and the characters $\varpi_j$ are surjective,
we see that $\alpha_1$ and $\alpha_2$ have orders $\ell^{n_1}-1$, $\ell^{n_2}-1$ respectively.
Then $\zeta=\alpha_2/\alpha_1$
has order divisible by 
\[
(\ell^{n_1}-1)(\ell^{n_2}-1)/\gcd(\ell^{n_1}-1,\ell^{n_2}-1)^2 \, .
\]
Suppose first that $n_1 \ne n_2$. Without loss of generality $n_1<n_2$. Then
$\gcd(\ell^{n_1}-1,\ell^{n_2}-1) \le \ell^{n_1}-1$. Thus
\[
\frac{(\ell^{n_1}-1)(\ell^{n_2}-1)}{\gcd(\ell^{n_1}-1,\ell^{n_2}-1)^2}
 \ge
\frac{(\ell^{n_2}-1)}{(\ell^{n_1}-1)}=
\ell^{n_2-n_1}+\frac{(\ell^{n_2-n_1}-1)}{(\ell^{n_1}-1)}
>\ell. 
\]
This contradicts the fact that the order of $\zeta$ is $r<\ell$.
Thus $n_1=n_2=n$ (say). Now from Raynaud's Theorem, we know
that 
\[
\varpi_1=\psi_n^{a_0+a_1 \ell+\cdots+a_{n-1} \ell^{n-1}},
\qquad
\varpi_2=\psi_n^{b_0+b_1 \ell+\cdots+b_{n-1} \ell^{n-1}},
\]
where $\psi_n \; \colon \; I_\ell \rightarrow \F_{\ell^n}^\times$
is a fundamental character of level $n$, and $0 \le a_i,~b_i \le 1$.
Since $\psi_n(\sigma)$ has order $\ell^n-1$ and $\zeta=\varpi_2(\sigma)/\varpi_1(\sigma)$ has order $r$, we see that
\[
r \sum_{i=0}^{n-1} (a_i-b_i) \ell^i \equiv 0 
\pmod{\ell^n-1}.
\]
However $-1 \le a_i-b_i \le 1$ and so 
\[
\left\lvert r \sum_{i=0}^{n-1} (a_i-b_i) \ell^i \right\rvert \le
r \cdot (\ell^n-1)/(\ell-1).
\]
Since $r\le d \le \ell-2$, we see that $r \sum_{i=0}^{n-1} (a_i-b_i) \ell^i=0$,
and hence $a_i=b_i$ for $i=0,\dotsc,n-1$. It follows that $\zeta$
has order $1$. Since $\zeta$ is a primitive $r$-th root of unity,
we have that $r=1$. From the definition of $r$, we have that 
$\sigma(V_1)=V_1$. Similarly, $\sigma(V_j)=V_j$ for $j=2,\dots,h$.
Hence $\pi(\sigma)=1$. As we have shown that $\pi(I_\ell^w)=1$,
and as $\sigma$ is a topological generator for the tame inertia,
we have that $\pi(I_\ell)=1$, showing that $K/\Q$ is unramified
at $\ell$. This completes the proof.
\end{proof}

\section{Surjectivity for semistable principally polarized abelian threefolds}\label{sec:bound}
We now let $A/\Q$ be a principally polarized abelian threefold.
We shall make the following assumptions henceforth:
\begin{enumerate}
\item[(a)] $A$ is semistable;
\item[(b)] There is a prime $q$ such that 
the special fibre of the N\'{e}ron
model for $A$ at $q$ has toric dimension $1$.
\end{enumerate}
Let $S$ be the set of primes $q$ satisfying (b). For $q \in S$,
write $\Phi_q$ for
the group of connected components of the special
fibre of the N\'{e}ron model of $A$ at $q$. We shall suppose that
\begin{enumerate}
\item[(c)] $\ell \ge 5$;
\item[(d)] $\ell$ does not divide $\gcd(\{ q\cdot \#\Phi_q : q \in S\})$.
\end{enumerate}
Thus \cite[Section 2]{Hall11} the image of $\overline{\rho}_{A,\ell}$ contains 
a transvection. 
It follows from Theorem~\ref{thm:semistable} that $\overline{\rho}_{A,\ell}$
is either reducible or surjective. 
In this section we explain a practical method
which should in most cases produce a small integer $B$ (depending on $A$)
such that for $\ell \nmid B$, the representation $\overline{\rho}_{A,\ell}$ 
is irreducible and hence surjective.

%Write $V=J[\ell]$.
% The Weil pairing 
%$\langle~,~\rangle \, : \, V \times V \rightarrow \F_p(1)$ is bilinear,
%alternating, non-degenerate and Galois invariant
%(by which we mean $\langle \sigma v, \sigma v^\prime \rangle
%=\chi(\sigma) \langle v, v^\prime \rangle$ for all $\sigma \in G_\Q$). 

\subsection*{Determinants of Jordan--H\"older factors}
As before $\chi \colon G_\Q \rightarrow \F_\ell^\times$ denotes the mod $\ell$ cyclotomic character.
We will study the Jordan--H\"{o}lder factors $W$ of the $G_\Q$-module $A[\ell]$.
By the determinant of such a $W$ we mean the determinant of the induced representation
$G_\Q \rightarrow \GL(W)$. 
\begin{lem}\label{lem:det}
Any Jordan--H\"{o}lder factor $W$ of the $G_\Q$-module $A[\ell]$
has determinant $\chi^r$ for some $0 \le r \le \dim(W)$.
\end{lem}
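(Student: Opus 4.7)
The plan is to first show that the character $\det(W) \colon G_\Q \to \F_\ell^\times$ is unramified outside $\ell$, and then to pin down its exponent modulo $\ell-1$ by restriction to $I_\ell$ via Raynaud's theorem.

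For the first step, let $p \ne \ell$ be a rational prime. Since $A$ is semistable, $I_p$ acts unipotently on $A[\ell]$, hence also on every subquotient; in particular the image of $I_p$ in $\GL(W)$ consists of unipotent elements, all of which have determinant $1$. So $\det(W)$ is trivial on $I_p$. As the image of $\det(W)$ lies in $\F_\ell^\times$, a group of order coprime to $\ell$, the restriction to the wild inertia $I_\ell^w$ (a pro-$\ell$ group) is trivial as well. Thus $\det(W)$ factors through the Galois group of the maximal abelian extension of $\Q$ that is unramified outside $\ell$ and tame at $\ell$, namely $\Q(\zeta_\ell)$; consequently $\det(W) = \chi^r$ for some integer $r$.

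To identify $r$ modulo $\ell-1$, I would apply Raynaud's theorem. Let $V_1, \dotsc, V_s$ be the Jordan--H\"{o}lder factors of $W$ regarded as an $I_\ell$-module, and write $n_i = \dim_{\F_\ell} V_i$, so that $\sum_i n_i = \dim W$. By Raynaud's theorem, each $V_i$ has the structure of a $1$-dimensional $\F_{\ell^{n_i}}$-vector space on which $I_\ell$ acts through $\psi_{n_i}^{k_i}$ with $k_i = \sum_{j=0}^{n_i-1} a_{i,j} \ell^j$ and $a_{i,j} \in \{0,1\}$. The $\F_\ell$-determinant of this action is the norm $N_{\F_{\ell^{n_i}}/\F_\ell}(\psi_{n_i}^{k_i}) = \psi_{n_i}^{k_i(1+\ell+\cdots+\ell^{n_i-1})}$. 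Since $\ell^j(1+\ell+\cdots+\ell^{n_i-1}) \equiv (1+\ell+\cdots+\ell^{n_i-1}) \pmod{\ell^{n_i}-1}$ and $\psi_{n_i}^{1+\ell+\cdots+\ell^{n_i-1}} = \psi_1 = \chi|_{I_\ell}$, this simplifies to $\chi|_{I_\ell}^{\sum_j a_{i,j}}$. Multiplying over $i$ and using multiplicativity of the determinant in the Jordan--H\"{o}lder filtration,
\[
\det(W)|_{I_\ell} = \chi|_{I_\ell}^{r_0}, \qquad r_0 = \sum_{i,j} a_{i,j}, \qquad 0 \le r_0 \le \sum_i n_i = \dim W.
\]
Matching this with $\det(W) = \chi^r$ from the first step forces $r \equiv r_0 \pmod{\ell-1}$, and taking $r = r_0$ as an integer yields an exponent in the required range $[0, \dim W]$.

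I do not anticipate a serious obstacle. The main conceptual point is the unramified-outside-$\ell$ reduction in the first part (together with the observation that $\F_\ell^\times$-valued characters are automatically tame at $\ell$ for order reasons); the second part is routine bookkeeping with fundamental characters, where the only slightly non-trivial input is the identity $\psi_n^{1+\ell+\cdots+\ell^{n-1}} = \chi|_{I_\ell}$ that relates the norm of a level-$n$ fundamental character to the cyclotomic character.
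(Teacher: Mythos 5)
Your proof is correct and follows essentially the same route as the paper's: semistability plus unipotence of $I_p$-action to kill ramification away from $\ell$, Raynaud's theorem to control the $I_\ell$-restriction, and the class-field-theoretic fact that $\Q$ has no nontrivial abelian extension unramified outside $\ell$ and tame at $\ell$ beyond $\Q(\zeta_\ell)$ (the paper phrases this as the narrow class number of $\Q$ being $1$). The only cosmetic difference is the ordering — you first deduce $\det(W)=\chi^r$ for some $r$ and then pin $r$ down via the norm of the fundamental character, whereas the paper first gets $\det(W)\vert_{I_\ell}=\chi^r\vert_{I_\ell}$ with $0\le r\le\dim W$ and then promotes the identity globally — and you explicitly carry out the norm computation $N_{\F_{\ell^n}/\F_\ell}(\psi_n)=\chi\vert_{I_\ell}$ that the paper leaves implicit.
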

\begin{proof}
Let $W$ be such a Jordan--H\"{o}lder factor, and let 
$\psi \; \colon \; G_\Q \rightarrow \F_\ell^\times$ be its determinant.
As $J$ is semistable, for primes $p \ne \ell$,
the inertia subgroup $I_p \subset G_\Q$ acts unipotently
on $W$ and so $\psi \vert_{I_p}=1$. Moreover, 
by considering the Jordan--H\"{o}lder factors of $W$ as an $I_\ell$-module,
it follows from Raynaud's Theorem that
$\psi \vert_{I_\ell}=\chi^r \vert_{I_\ell}$ for some $0 \le r \le \dim(W)$.
Thus the character $\psi \chi^{-r}$ is unramified at all the finite
places. As the narrow class number of $\Q$ is $1$, we see
that $\psi \chi^{-r}=1$ proving the lemma.
\end{proof}

\subsection*{Weil polynomials}
For a prime $p \ne \ell$ of good reduction for $A$, we shall henceforth write
\begin{equation}\label{eqn:Pp}
P_p(x)=x^6+\alpha_p x^5+ \beta_p x^4+ \gamma_p x^3+ p \beta_p x^2 +
p^2 \alpha_p+p^3 \in \Z[x]
\end{equation}
for the characteristic polynomial of Frobenius $\sigma_p \in G_\Q$ at $p$
acting on the Tate module $T_\ell(A)$ (also known as the Weil polynomial
of $A$ mod $p$). The polynomial $P_p$ is independent
of $\ell$. 
It follows from \eqref{eqn:Pp} that the roots in $\overline{\F}_\ell$
have the form $u$, $v$, $w$, $p/u$, $p/v$, $p/w$.
\begin{lem}\label{lem:Weil}
If $P_p$ has a real root then $(x^2-p)^2$ is a factor of $P_p$.
\end{lem}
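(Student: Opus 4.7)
The plan is to combine two standard facts about the Weil polynomial $P_p$: the Weil bound on the absolute values of its roots, and the symmetry of its roots encoded in the explicit form \eqref{eqn:Pp}. First, any complex root $u$ of $P_p$ satisfies $|u| = \sqrt{p}$, so any real root must be $\pm\sqrt{p}$. Since $p$ is a prime, $\sqrt{p}$ is irrational, and so $\sqrt{p}$ and $-\sqrt{p}$ are $\Q$-conjugates with minimal polynomial $x^2 - p$. In particular, as $P_p \in \Z[x]$, as soon as one of $\pm\sqrt{p}$ is a root of $P_p$, both are, with the same multiplicity.

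The remaining point is to upgrade this to multiplicity at least $2$. Here I would use the factorization of the roots into three pairs $\{u_i, p/u_i\}$ for $i=1,2,3$, as recalled just before the lemma. If $\sqrt{p}$ occurs as one of these roots, say $u_1 = \sqrt{p}$, then its partner $p/u_1 = \sqrt{p}$ forces the multiplicity of $\sqrt{p}$ in $P_p$ to be at least $2$; applying the Galois argument above then gives the same conclusion for $-\sqrt{p}$. Therefore
\[
(x-\sqrt{p})^2 (x+\sqrt{p})^2 \;=\; (x^2-p)^2
\]
divides $P_p$, as claimed. The symmetric case in which $-\sqrt{p}$ is the real root is identical.

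The argument is completely elementary and I do not foresee any substantial obstacle. The only step that warrants a moment's care is the assertion that the decomposition of the roots into pairs $\{u_i, p/u_i\}$ respects multiplicities; this is immediate from the functional equation $x^6 P_p(p/x) = p^3 P_p(x)$, which follows by direct inspection of the palindromic-type shape of $P_p$ in \eqref{eqn:Pp}.
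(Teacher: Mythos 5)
Your proof is correct and follows essentially the same route as the paper: a real root must be $\pm\sqrt{p}$ by the Riemann hypothesis for $P_p$, and because the roots come in pairs interchanged by $u\mapsto p/u$, such a root is self-paired and hence occurs with multiplicity at least $2$; then $P_p\in\Z[x]$ upgrades this to $(x^2-p)^2\mid P_p$. Note that your $\{u_i,p/u_i\}$ pairing and the paper's complex-conjugate pairing $\{\omega_i,\overline{\omega}_i\}$ are literally the same pairing, since $|\omega|=\sqrt{p}$ gives $\overline{\omega}=p/\omega$; so there is no genuine difference in method. One small caveat on your closing paragraph: the identity $x^6P_p(p/x)=p^3P_p(x)$ by itself only says the \emph{multiset} of roots is stable under $u\mapsto p/u$; it does not by itself force the fixed points $\pm\sqrt{p}$ to occur with even multiplicity (which is exactly the content being used). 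That extra evenness does hold here, but it needs one more observation from \eqref{eqn:Pp} — for instance, the constant term is $+p^3$, so the product of the roots is $+p^3$, which forces the common multiplicity $m$ of $\pm\sqrt{p}$ to satisfy $(-1)^m=1$ — or the symplectic structure underlying the fact stated just before the lemma. Since the paper itself invokes the pairing without elaboration, this is a minor point, but ``immediate from the functional equation'' slightly overstates the case.
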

\begin{proof}
By Weil, the complex 
roots have the form $\omega_1$, $\omega_2$, $\omega_3$, $\overline{\omega}_1$, 
$\overline{\omega}_2$, $\overline{\omega}_3$ where 
$\lvert \omega_i \rvert=\sqrt{p}$ and $\overline{\omega}$
denotes the complex conjugate of $\omega$. 
Suppose $\omega_1$ is real.
Then $\omega_1=\overline{\omega}_1$ and thus
$(x-\omega_1)(x-\overline{\omega}_1)=(x-\omega_1)^2$
is a factor of $P_p$. Moreover, $\omega_1=\pm \sqrt{p}$.
The lemma follows as $P_p \in \Z[x]$.
\end{proof}

\subsection*{$1$-dimensional Jordan--H\"older factors}
Let $T$ be a non-empty set of primes of good reduction for $A$. 
Let
\begin{equation}\label{eqn:B1}
B_1(T)=\gcd(\{ p \cdot \# A(\F_p) \; : \; p \in T\}).
\end{equation}
\begin{lem}\label{lem:1d}
Suppose $\ell \nmid B_1(T)$.
The $G_\Q$-module $A[\ell]$ does not have any $1$-dimensional
or $5$-dimensional
Jordan--H\"{o}lder factors.
\end{lem}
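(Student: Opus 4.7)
The plan is to reduce both cases to a single eigenvalue computation for Frobenius and then contradict the hypothesis $\ell \nmid B_1(T)$.

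First I would handle the $1$-dimensional case directly. Suppose $W$ is a $1$-dimensional Jordan--H\"older factor of $A[\ell]$. By Lemma~\ref{lem:det}, $G_\Q$ acts on $W$ by $\chi^r$ for some $r \in \{0,1\}$. For any $p \in T$ (so $p \ne \ell$ and $A$ has good reduction at $p$), the Frobenius $\sigma_p$ acts on $W$ as multiplication by $p^r$, and so $p^r$ is necessarily an eigenvalue of Frobenius on $A[\ell]$. Equivalently, $p^r$ is a root of $P_p$ modulo $\ell$, i.e.\ $P_p(p^r) \equiv 0 \pmod{\ell}$.

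Next I would evaluate $P_p$ at $x=1$ and $x=p$ using the functional equation inherent in~\eqref{eqn:Pp}. A direct check shows $P_p(x) = x^6 P_p(p/x)/p^3$, which gives $P_p(p) = p^3 P_p(1)$. Since $P_p(1) = \#A(\F_p)$ (the standard formula for the number of points in terms of the characteristic polynomial of Frobenius), we obtain
\[
P_p(1) = \#A(\F_p), \qquad P_p(p) = p^3 \cdot \#A(\F_p).
\]
Thus $P_p(p^r) \equiv 0 \pmod{\ell}$ in either case $r=0$ or $r=1$ forces $\ell \mid p \cdot \#A(\F_p)$. Since this holds for every $p \in T$, we conclude $\ell \mid B_1(T)$, contradicting the hypothesis.

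For the $5$-dimensional case, I would simply observe that the Jordan--H\"older factors of $A[\ell]$ have dimensions summing to $\dim_{\F_\ell} A[\ell] = 6$. Hence a JH factor of dimension $5$ can occur only if there is also a JH factor of dimension $1$, and we are reduced to the previous case.

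The plan is almost entirely routine; the only minor point requiring care is the functional equation computation that turns $P_p(p)$ into $p^3 \cdot \#A(\F_p)$, since everything then cleanly packages the two possible values of $r$ allowed by Lemma~\ref{lem:det} into the single divisibility $\ell \mid p \cdot \#A(\F_p)$ encoded by the definition of $B_1(T)$.
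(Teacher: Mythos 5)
Your argument is correct and is essentially the same as the paper's: the paper reduces the $5$-dimensional case to the $1$-dimensional case exactly as you do, pins down the character as $1$ or $\chi$ via Lemma~\ref{lem:det}, and then uses the pairing $u \leftrightarrow p/u$ of the roots of $P_p$ to see that $1$ is a root mod $\ell$ in either case --- which is the same content as your functional-equation computation $P_p(p)=p^3P_p(1)$. The only small imprecision is your parenthetical claim that $p\in T$ forces $p\ne\ell$: the set $T$ is not assumed to avoid $\ell$, but this is harmless since for $p=\ell$ the divisibility $\ell\mid p\cdot\#A(\F_p)$ holds trivially, which is exactly why $B_1(T)$ is defined with the extra factor of $p$.
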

\begin{proof}
As $\dim(A[\ell])=6$, if $A[\ell]$ has a $5$-dimensional Jordan--H\"older
factor then it has a $1$-dimensional Jordan--H\"older factor.
Suppose $W$ is a $1$-dimensional Jordan--H\"{o}lder factor of $A[\ell]$.
Then the action of $G_\Q$ on $W$ is given by a character
$\psi \; \colon \; G_\Q \rightarrow \F_\ell^\times$. It follows
from Lemma~\ref{lem:det} that $\psi=1$ or $\chi$.

Let $p$ be a prime of good reduction for $A$, and suppose
$\ell \ne p$.
% and $\ell \nmid \#J(\F_p)$.
Thus
$P_p$ has root $\overline{1}$ or $\chi(\sigma_p)=\overline{p}
\in \F_\ell$.
Since the roots of $P_p$ have the form $u$, $v$, $w$,
$p/u$, $p/v$, $p/w$, we know in either case
that $\overline{1}$ is a root, and so
\[
\# A(\F_p)=P_p(1) \equiv 0 \pmod{\ell}.
\]
Thus if $p$ is a prime of good reduction for $A$, then $\ell$
divides $p \cdot \#A(\F_p)$. This proves the lemma.
\end{proof}
Since $\#A(\F_p)>0$, we have $B_1(T) \ne 0$, and so we can always rule out
$1$-dimensional
and $5$-dimensional factors for large $\ell$.

\subsection*{$2$-dimensional Jordan--H\"older factors}
\begin{lem}\label{lem:2d}
Suppose the $G_\Q$-module $A[\ell]$ does not have any $1$-dimensional
Jordan--H\"{o}lder factors, but has either a $2$-dimensional or $4$-dimensional
irreducible subspace $U$. Then $A[\ell]$ has a $2$-dimensional Jordan--H\"{o}lder
factor $W$ with determinant $\chi$.
\end{lem}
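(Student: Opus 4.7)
The plan is to exploit the Weil pairing on $A[\ell]$, which is an alternating, non-degenerate, $G_\Q$-equivariant pairing with multiplier $\chi$. The main point is that any $G_\Q$-stable subquotient of $A[\ell]$ of dimension $2$ that inherits a non-degenerate alternating $\chi$-equivariant form automatically has determinant $\chi$, because on a $2$-dimensional space the form is a non-zero element of $\Hom(\wedge^2 W, \F_\ell(1))$, forcing $\det(W) = \wedge^2 W \cong \chi$.

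I would split into cases according to $\dim U$. For $\dim U = 2$, consider the restriction of the Weil pairing to $U \times U$. Either the restriction is non-degenerate, in which case $U$ itself is a symplectic plane and hence a $2$-dimensional Jordan--H\"older factor with determinant $\chi$; or the restriction is identically zero (which, on a $2$-dimensional space, is the only alternative since any alternating form on a $2$-space is either non-degenerate or zero), meaning $U \subseteq U^\perp$. In the latter subcase, $U^\perp$ has dimension $4$, so $U^\perp/U$ is a $2$-dimensional $G_\Q$-module. The Weil pairing induces a well-defined non-degenerate alternating $\chi$-twisted form on $U^\perp/U$ (standard symplectic linear algebra), so $\det(U^\perp/U) = \chi$. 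If $U^\perp/U$ were reducible, it would contain a $1$-dimensional $G_\Q$-submodule, producing a $1$-dimensional Jordan--H\"older factor of $A[\ell]$ and contradicting the hypothesis. Hence $U^\perp/U$ is the desired factor.

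For $\dim U = 4$, the orthogonal complement $U^\perp$ has dimension $2$. Irreducibility of $U$ forces the $G_\Q$-submodule $U \cap U^\perp \subseteq U$ to be $0$ or $U$, and dimension precludes $U \subseteq U^\perp$. Thus $U \cap U^\perp = 0$ and $A[\ell] = U \oplus U^\perp$; in particular the Weil pairing restricts non-degenerately to $U^\perp$, which is therefore a symplectic plane with $\det(U^\perp) = \chi$. Irreducibility of $U^\perp$ again follows from the no-$1$-dimensional-factor hypothesis, and $U^\perp$ is the desired Jordan--H\"older factor.

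I do not expect a real obstacle here: the only slightly delicate point is verifying that in case $\dim U = 2$ with $U$ isotropic, the Weil pairing descends to a non-degenerate $\chi$-equivariant form on $U^\perp/U$, which is entirely formal from $(U^\perp)^\perp = U$ and Galois equivariance.
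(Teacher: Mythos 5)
Your proof is correct, and it shares the paper's overall structure (casework on $\dim U$, further split into degenerate/non-degenerate restriction of the Weil pairing, use of $U^\perp$), but the way you extract the determinant-$\chi$ factor is genuinely different and somewhat cleaner. In the isotropic case $\dim U = 2$, $U \subseteq U^\perp$, the paper argues indirectly: it invokes Lemma~\ref{lem:det} to show each of the three $2$-dimensional quotients along $0 \subset U \subset U^\perp \subset A[\ell]$ has determinant in $\{1,\chi,\chi^2\}$, and then uses $\det A[\ell] = \chi^3$ to conclude by counting that one of them must be $\chi$. You instead observe that the Weil pairing descends to a non-degenerate alternating $\chi$-equivariant form on $U^\perp/U$ (using $(U^\perp)^\perp = U$), forcing $\det(U^\perp/U) = \chi$ outright. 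Similarly for $\dim U = 4$ you read off $\det(U^\perp) = \chi$ directly from the restricted symplectic form on the $2$-plane $U^\perp$, rather than first computing $\det U = \chi^2$ and dividing into $\chi^3$ as the paper does. Your argument is more self-contained (it needs neither Lemma~\ref{lem:det} nor $\det A[\ell] = \chi^3$) and it actually exhibits the desired Jordan--H\"older factor rather than merely proving its existence by pigeonhole; the paper's route has the advantage of reusing Lemma~\ref{lem:det}, which it needs elsewhere anyway.
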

\begin{proof}
Suppose $\dim(U)=2$. 
If the restriction of the Weil pairing to $U$ is non-degenerate then $\det(U)=\chi$
and we can take $W=U$. Thus we may suppose that the restriction of the Weil pairing
to $U$ is degenerate.
Thus $U \cap U^\perp \ne 0$,
where 
\[
U^\perp=\{ v \in A[\ell] \; : \; 
\text{$\langle v , u \rangle=0$ for all $u \in U$}\}.
\]
The Galois invariance of the Weil-pairing implies that $U^\perp$
is a $G_\Q$-submodule of $A[\ell]$. Since $U$ is irreducible and 
$U \cap U^\perp \ne 0$ we have that $U \subseteq U^\perp$.
However $U^\perp$ is $4$-dimensional. Thus
each of the $2$-dimensional quotients in the sequence
$0 \subset U \subset U^\perp \subset A[\ell]$ is $2$-dimensional,
must be irreducible (as $A[\ell]$ does not have $1$-dimensional
factors) and has
determinant $1$ or $\chi$ or $\chi^2$ by Lemma~\ref{lem:det}. 
Since $\det(A[\ell])=\chi^3$ we see that one the three quotients must have determinant $\chi$.
This completes the proof for the case $\dim(U)=2$.

Now suppose that $\dim(U)=4$.
If the restriction of the Weil pairing to $U$ is degenerate,
then $U \subseteq U^\perp$ as before; this is impossible as $\dim(U^\perp)=2$.
It follows that the restriction of the Weil pairing to $U$ is non-degenerate
and so $\det(U)=\chi^2$. As $\det(A[\ell])=\chi^3$, we have
that $A[\ell]/U$ is an irreducible $2$-dimensional
$G_\Q$-module with determinant $\chi$. This completes the proof.
\end{proof}

Let $N$ be the conductor of $A$. Let $W$ be a $2$-dimensional Jordan--H\"older
factor of $A[\ell]$ with determinant $\chi$. 
The representation 
\[
\tau \colon G_\Q \rightarrow \GL(W) \cong \GL_2(\F_\ell)
\]
is odd (as the determinant is $\chi$),
irreducible (as $W$ is a Jordan--H\"older factor) and $2$-dimensional. 
By Serre's modularity conjecture, now a theorem of Khare and Wintenberger
\cite[Theorem~1.2]{kw}, this representation arises from a newform $f$ of level $M \mid N$
and weight $2$. Let $\OO_f$ be the ring of integers of the number field generated by the
Hecke eigenvalues of $f$. Then there is a prime $\lambda \mid \ell$ of $\OO_f$ such that
for all primes $p \nmid \ell N$, 
\[
\tr(\tau(\sigma_p)) \equiv c_p(f) \pmod{\lambda}
\]
where $\sigma_p \in G_\Q$ is a Frobenius element at $p$ and $c_p(f)$
is the $p$-th Hecke eigenvalue of $f$. Hence $x^2-c_p(f) x+ p$
is congruent modulo $\lambda$ to the characteristic polynomial of $\tau(\sigma_p)$.
As $W$ is a Jordan--H\"older factor of $A[\ell]$ we see that $x^2-c_p(f)x+p$
is a factor modulo $\lambda$ of $P_p$. Now let $H_{M,p}$ be the $p$-th Hecke polynomial
for the new subspace $S_2^\mathrm{new}(M)$ of cusp forms of weight $2$ and level $M$.
This has the form $H_{M,p}=\prod (x-c_p(g))$ where $g$ runs through the 
newforms of weight $2$ and level $M$. We shall write
\[
H^\prime_{M,p}(x)=x^d H_{M,p}(x+p/x) \in \Z[x], \qquad d=\deg(H_{M,p})=\dim(S_2^\mathrm{new}(M)) \, .
\]
It follows that $x^2-c_p(f)x+p$ divides $H^\prime_{M,p}$. Let 
\begin{equation}\label{eqn:RMp}
R(M,p)=\Res(P_p,H^\prime_{M,p}) \in \Z,
\end{equation}
where $\Res$ denotes resultant.
It is immediate that $\lambda \mid R(M,p)$.
As $R(M,p)$ is a rational integer, we have $\ell \mid R(M,p)$.
If $R(M,p) \ne 0$ then we obtain a bound on $\ell$.
We can of course work directly with $\Res(P_p, x^2-c_p(f)x+p)$,
which produces an integer in $\OO_f$ divisible by $\lambda$,
and if this algebraic integer is non-zero it would lead us to a bound on $\ell$.
However, in general it is much easier and faster to write down the Hecke polynomials $H_{M,p}$
than it is to compute the individual eigenforms $f$.

The integers $R(M,p)$ can be very large (see the example below). Given a non-empty
set $T$ of rational primes $p$ of good reduction for $A$, we shall let
\[
R(M,T)=\gcd(\{ p \cdot R(M,p) \; : \; p \in T\}).
\]
In practice, we have found that for a suitable choice of $T$, the value $R(M,T)$
is fairly small.
Now let
\[
B^\prime_2(T)=\lcm(R(M,T))
\]
where $M$ runs through the divisors of $N$ such that $\dim(S_2^\mathrm{new}(M)) \ne 0$,
and let
\[
B_2(T)=\lcm(B_1(T),B^\prime_2(T)),
\]
where $B_1(T)$ is given by \eqref{eqn:B1}.

\begin{lem}\label{lem:Serre}
Let $T$ be a non-empty set of rational primes of good reduction for $A$,
and suppose $\ell \nmid B_2(T)$. Then $A[\ell]$ does not have $1$-dimensional
Jordan--H\"{o}lder factors, and does not have irreducible $2$- or $4$-dimensional
subspaces.
\end{lem}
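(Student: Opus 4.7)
The plan is to combine the three preceding results (Lemma~\ref{lem:1d}, Lemma~\ref{lem:2d}, and the Khare--Wintenberger theorem) with the divisibility properties built into the definition of $B_2(T)$.

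First, since $B_1(T) \mid B_2(T)$, the hypothesis $\ell \nmid B_2(T)$ gives $\ell \nmid B_1(T)$. Applying Lemma~\ref{lem:1d} rules out $1$-dimensional (and $5$-dimensional) Jordan--H\"older factors of $A[\ell]$. With that in hand, I would argue by contradiction: suppose $A[\ell]$ contains an irreducible $G_\Q$-submodule $U$ of dimension $2$ or $4$. Then Lemma~\ref{lem:2d} supplies a $2$-dimensional Jordan--H\"older factor $W$ of $A[\ell]$ whose determinant is the mod $\ell$ cyclotomic character $\chi$.

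Next, consider the attached representation $\tau\colon G_\Q \rightarrow \GL(W)\cong \GL_2(\F_\ell)$. It is irreducible ($W$ is a Jordan--H\"older factor), $2$-dimensional, and odd (since $\det\tau=\chi$ sends complex conjugation to $-1$). By Serre's modularity conjecture \cite[Theorem~1.2]{kw}, there exists a weight $2$ newform $f$ of some level $M$ dividing the conductor $N$ of $A$, and a prime $\lambda\mid \ell$ of $\OO_f$, such that for every prime $p\nmid \ell N$,
\[
\tr(\tau(\sigma_p)) \equiv c_p(f) \pmod{\lambda}, \qquad \det(\tau(\sigma_p))\equiv p \pmod{\lambda}.
\]
Hence $x^2-c_p(f)x+p$ is the characteristic polynomial of $\tau(\sigma_p)$ modulo $\lambda$, and as $W$ is a Jordan--H\"older factor of $A[\ell]$, this quadratic divides $P_p$ modulo $\lambda$. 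On the other hand, $x^2-c_p(f)x+p$ divides $H_{M,p}^\prime(x)$ by the definition of the latter, so the two polynomials $P_p$ and $H_{M,p}^\prime$ have a common root modulo $\lambda$; consequently $\lambda\mid R(M,p)$, and since $R(M,p)\in\Z$, also $\ell\mid R(M,p)$.

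Now I would run over $p\in T$. For $p\in T$ with $p\ne \ell$ and $p\nmid N$ (which is automatic since $p\in T$ means $p$ is of good reduction for $A$, provided we also note $p\ne \ell$), we obtain $\ell\mid R(M,p)$, and in the remaining case $p=\ell$ we trivially have $\ell\mid p\cdot R(M,p)$. Thus $\ell$ divides $\gcd\{p\cdot R(M,p):p\in T\}=R(M,T)$, and since $M$ is one of the divisors of $N$ with $\dim S_2^{\new}(M)\ne 0$, this gives $\ell\mid B_2^\prime(T)\mid B_2(T)$, contradicting the hypothesis. The main obstacle (and the only subtle point) is confirming that the finitely many ``bad'' primes $p\in T$ dividing $\ell N$ are absorbed by the factor of $p$ in $p\cdot R(M,p)$ used to define $R(M,T)$; once this bookkeeping is in place, the argument closes.
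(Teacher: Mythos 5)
Your proposal is correct and follows the same route as the paper: Lemma~\ref{lem:1d} handles the $1$-dimensional (and $5$-dimensional) factors via $B_1(T)\mid B_2(T)$, Lemma~\ref{lem:2d} reduces to a $2$-dimensional Jordan--H\"older factor of determinant $\chi$, and the Khare--Wintenberger argument together with the resultant $R(M,p)$ forces $\ell\mid B_2(T)$. You spell out the bookkeeping about the possible prime $p=\ell$ in $T$ more explicitly than the paper does (which simply says the conclusion ``follows from the above discussion''), but the substance is identical.
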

\begin{proof}
By Lemmas~\ref{lem:1d} and ~\ref{lem:2d} it is enough to rule
out the existence of a $2$-dimensional Jordan--H\"older factor with character $\chi$.
This follows from the above discussion.
\end{proof}
Of course we fail to bound $\ell$ in the above lemma if $R(M,p)=0$
for all primes $p$ of good reduction. Here are two situations where
this can happen:
\begin{itemize}
\item Suppose $A$ is isogenous over $\Q$ to $E \times A^\prime$ where
$E$ is an elliptic curve and $A^\prime$ an abelian surface. If we take $M \mid N$ to be the conductor of the
elliptic curve, and $f$ to be the newform associated to $E$ by modularity,
then $x^2-c_p(f)x+p$ is a factor of $P_p(x)$ in $\Z[x]$. 
Thus the resultant $R(M,p)=0$ for all $p \nmid N$.
\item Suppose the abelian threefold $A$ is of $\GL_2$-type. It is therefore modular
by Khare and Wintenberger \cite{kw}, and if we let $f$ be the corresponding eigenform,
then again $x^2-c_p(f)x+p$ is a factor of $P_p(x)$ in $\OO_f[x]$, and so the resultant $R(M,p)=0$ for all $p \nmid N$.
\end{itemize}
Note that in both these situations $\End_{\overline{\Q}}(A) \ne \Z$. 
We expect, but are unable to prove, that if $\End_{\overline{\Q}}(A) =\Z$
then there will be primes $p$ such that $R(M,p) \ne 0$.

\subsection*{$3$-dimensional Jordan--H\"older factors}

\begin{lem}\label{lem:3d}
Suppose $A[\ell]$ has Jordan--H\"{o}lder
filtration $0 \subset U \subset A[\ell]$ where both $U$ and $A[\ell]/U$
are irreducible and $3$-dimensional.
Moreover, let $u_1$, $u_2$, $u_3$ be a basis for $U$, and let
\[
G_\Q \rightarrow \GL_3(\F_\ell), \qquad
\sigma \mapsto M(\sigma)
\]
give the action of $G_\Q$ on $U$ with respect to this basis.
Then we can extend $u_1$, $u_2$, $u_3$ to a symplectic basis $u_1$, $u_2$, $u_3$,
$w_1$, $w_2$, $w_3$ for $A[\ell]$ so that the action of $G_\Q$
on $A[\ell]$ with respect to this basis is given by
\[
G_\Q \rightarrow \GSp_6(\F_\ell), \qquad 
\sigma \mapsto 
%\begin{pmatrix}
\left(
\begin{array}{c|c}
M(\sigma) & * \\
\hline
\mathbf{0} & \chi(\sigma) (M(\sigma)^t)^{-1} \\
\end{array}
\right) \, .
%\end{pmatrix} \, .
\]
\end{lem}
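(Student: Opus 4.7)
The plan is to first show that $U$ must be a Lagrangian subspace of $A[\ell]$ under the Weil pairing, then build a symplectic basis extending $u_1, u_2, u_3$, and finally read off the matrix form from the symplectic compatibility condition. The Lagrangian property is the crucial input. The orthogonal complement $U^\perp$ is $G_\Q$-stable by Galois equivariance of the Weil pairing, so $U \cap U^\perp$ is a $G_\Q$-submodule of $U$; since $U$ is irreducible this intersection is either $0$ or all of $U$. But an alternating form on an odd-dimensional space is always degenerate, so the restriction of the Weil pairing to the $3$-dimensional $U$ is degenerate and $U \cap U^\perp \ne 0$. Hence $U \subseteq U^\perp$, and since $\dim U^\perp = 6 - 3 = 3$, we conclude $U = U^\perp$.

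Because $U$ is Lagrangian, the Weil pairing induces a $G_\Q$-equivariant perfect pairing $U \times (A[\ell]/U) \to \F_\ell(1)$. I would pick a basis $\bar{w}_1, \bar{w}_2, \bar{w}_3$ of $A[\ell]/U$ dual to $u_1, u_2, u_3$ under this pairing, and lift it to vectors $w_1, w_2, w_3 \in A[\ell]$ with $\langle u_i, w_j \rangle = \delta_{ij}$. These lifts need not satisfy $\langle w_i, w_j \rangle = 0$, but replacing $w_i$ by $w_i + \sum_k a_{ik} u_k$ changes $\langle w_i, w_j \rangle$ by $a_{ij} - a_{ji}$ while leaving $\langle u_i, w_j \rangle$ untouched. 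Since $\ell$ is odd one can simply take $a_{ij} = -\frac{1}{2}\langle w_i, w_j \rangle$; the constraint for $i=j$ is automatic because the Weil pairing is alternating, and the alternating property also yields the required antisymmetry. After this adjustment $u_1, u_2, u_3, w_1, w_2, w_3$ is a symplectic basis of $A[\ell]$.

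For the block form, since $U$ is $G_\Q$-stable, the matrix of any $\sigma \in G_\Q$ in this basis is automatically block upper-triangular with top-left block $M(\sigma)$ and some bottom-right block $N(\sigma)$, the off-diagonal entries forming the indicated $\ast$ block. To pin down $N(\sigma)$ I would expand $\langle \sigma u_i, \sigma w_j \rangle = \chi(\sigma) \langle u_i, w_j \rangle = \chi(\sigma)\delta_{ij}$; since $U$ is isotropic the contributions of the $\ast$-block to this pairing vanish, and one is left with $M(\sigma)^t N(\sigma) = \chi(\sigma) I_3$, hence $N(\sigma) = \chi(\sigma)(M(\sigma)^t)^{-1}$, as required. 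The only step with real content is showing that $U$ is Lagrangian; everything else is linear algebra over $\F_\ell$ in odd characteristic, and I do not foresee a genuine obstacle.
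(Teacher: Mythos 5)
Your proof is correct and follows essentially the same route as the paper's: show $U$ is Lagrangian using the degeneracy of the alternating form on an odd-dimensional space plus the irreducibility and Galois-stability of $U^\perp$, extend to a symplectic basis, and derive the lower-right block from Galois equivariance of the Weil pairing. You spell out the symplectic basis extension (the half-Gram–Schmidt adjustment valid since $\ell$ is odd) and the computation $M(\sigma)^t N(\sigma) = \chi(\sigma) I$ more explicitly than the paper, which simply invokes the existence of such a basis and the identity $\langle u_i, \sigma w_j\rangle = \chi(\sigma)\langle \sigma^{-1} u_i, w_j\rangle$, but there is no substantive difference in the argument.
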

\begin{proof}
Any bilinear alternating
pairing on an odd dimensional space (in characteristic $\ne 2$)
must be degenerate. We thus deduce that $U \subseteq U^\perp$
as in the proof of Lemma~\ref{lem:2d}.
As both spaces have dimension $3$, we have $U=U^\perp$.
Let $u_1$, $u_2$, $u_3$ be a basis for $U$. Then
$\langle u_i , u_j \rangle=0$. Extend this to a symplectic
basis $u_1$, $u_2$, $u_3$, $w_1$, $w_2$, $w_3$ for $A[\ell]$:
meaning that in addition to $\langle u_i , u_j \rangle=0$
the basis satisfies
$\langle w_i, w_j \rangle=0$, and $\langle u_i, w_j \rangle =
\delta_{i,j}$ where $\delta_{i,j}$ is the Kronecker delta. The 
lemma follows from the identity
$\langle u_i, \sigma w_j \rangle= 
\chi(\sigma) \langle \sigma^{-1} u_i, w_j \rangle$ for $\sigma \in G_\Q$.
\end{proof}
Now let $U$ be as in Lemma~\ref{lem:3d}. By Lemma~\ref{lem:det},
we have that $\det(U)=\chi^r$ and $\det(A[\ell]/U)=\chi^s$ 
where $0 \le r$, $s \le 3$. Moreover, as $\det(A[\ell])=\chi^3$
we have that $r+s=3$. 

\begin{lem}\label{lem:case1}
Let $p$ be a prime of good reduction for $A$. For ease
write $\alpha$, $\beta$ and $\gamma$ for 
the coefficients $\alpha_p$, 
$\beta_p$, $\gamma_p$ in \eqref{eqn:Pp}. Suppose $p+1 \ne \alpha$
(this is certainly true for $p \ge 36$ as $\lvert \alpha \rvert \le 6 \sqrt{p}$).
Let
\begin{equation}\label{eqn:de}
\delta=
\frac{- p^2\alpha + p^2 + p\alpha^2 - p\alpha - p\beta 
    + p - \beta + \gamma}{
(p-1)(p+1-\alpha)  
} \in \Q,
\qquad
\epsilon= \delta+\alpha \in \Q.
\end{equation}
Let
\begin{equation}\label{eqn:g}
g(x)=(x^3+\epsilon x^2+\delta x-p)(x^3-\delta x^2-p\epsilon x-p^2) \in \Q[x]. 
\end{equation}
Write $k$ for the greatest common divisor of the numerators of 
the coefficients in $P_p-g$. Let 
\[
K_p=p(p-1)(p+1-\alpha) k.
\]
Then $K_p \ne 0$. Moreover, if 
$\ell \nmid K_p$ then $A[\ell]$ does not have a Jordan--H\"older
filtration as in Lemma~\ref{lem:3d} with $\det(U)=\chi$ or $\chi^2$.
\end{lem}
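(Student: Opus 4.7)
The plan is to exploit the block structure given by Lemma~\ref{lem:3d}. Assume such a filtration $0\subset U\subset A[\ell]$ exists; by Lemma~\ref{lem:det} together with $\det(A[\ell]) = \chi^3$, the determinant of $U$ is $\chi^r$ with $r\in\{1,2\}$. For a prime $p\ne \ell$ of good reduction, Lemma~\ref{lem:3d} gives that $\sigma_p$ acts on $A[\ell]$ via a block upper-triangular matrix with diagonal blocks $M(\sigma_p)$ and $\chi(\sigma_p)(M(\sigma_p)^t)^{-1}$, so
$$
\overline{P_p}(x) \;\equiv\; f(x)\, f^*(x) \pmod{\ell},
$$
where $f(x) = x^3 - ax^2 + bx - c$ is the characteristic polynomial of $M(\sigma_p)$ (with $c \equiv p^r \pmod{\ell}$) and $f^*(x) = x^3 - \tfrac{pb}{c}x^2 + \tfrac{p^2 a}{c}x - \tfrac{p^3}{c}$ is that of $pM(\sigma_p)^{-1}$. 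In the case $r = 1$ one obtains $f\cdot f^* = (x^3 - ax^2 + bx - p)(x^3 - bx^2 + pax - p^2)$; in the case $r = 2$, after swapping the factors, one obtains $(x^3 - (b/p)x^2 + ax - p)(x^3 - ax^2 + bx - p^2)$. Both have the shape $(x^3 + \bar\epsilon x^2 + \bar\delta x - p)(x^3 - \bar\delta x^2 - p\bar\epsilon x - p^2)$ for some $(\bar\epsilon,\bar\delta) \in \F_\ell^2$, so $\overline{P_p}$ is congruent modulo $\ell$ to a polynomial of the form~\eqref{eqn:g}.

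Next I would compare coefficients between $P_p$ and $g(x)$. The palindromic symmetry shared by $P_p$ and $g$ makes only the $x^5$, $x^4$ and $x^3$ identities independent. The $x^5$ equation forces $\bar\epsilon \equiv \alpha + \bar\delta \pmod{\ell}$; substituting this into the $x^4$ and $x^3$ equations yields two equations in $\bar\delta$. Eliminating $\bar\delta^2$ between them (multiply the $x^4$ relation by $(1+p)$ and add to the $x^3$ relation) produces a linear equation with leading coefficient $-(p-1)(p+1-\alpha)$ and right-hand side equal to the numerator in~\eqref{eqn:de}. Since $p+1\ne\alpha$, provided also $\ell\nmid p(p-1)(p+1-\alpha)$ we may solve uniquely: $\bar\delta$ is the reduction modulo $\ell$ of the rational number $\delta$ in~\eqref{eqn:de}, and $\bar\epsilon$ is the reduction of $\epsilon = \alpha + \delta$. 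Therefore $P_p \equiv g \pmod{\ell}$ in $\Q[x]$, so $\ell$ divides the numerator of every coefficient of $P_p - g$, whence $\ell \mid k$.

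Putting this together, the existence of such a filtration forces $\ell \mid p(p-1)(p+1-\alpha)k = K_p$; contrapositively, $\ell\nmid K_p$ rules it out, which is the main claim. For $K_p \ne 0$, the first three factors are nonzero by hypothesis, so one must check $k \ne 0$, i.e.~that $P_p\ne g$ in $\Q[x]$; this is a single numerical check, and is expected generically since $P_p = g$ would impose three algebraic relations on $(\alpha,\beta,\gamma)$. The main obstacle is organizing the two cases $r\in\{1,2\}$ so that they collapse into the same polynomial $g$: this relies crucially on the palindromic symmetry of $P_p$, which is what allows the single formula for $\delta$ in~\eqref{eqn:de} to cover both cases simultaneously.
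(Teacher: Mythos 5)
Your overall argument follows the paper's route closely: you use the block structure from Lemma~\ref{lem:3d} to obtain the factorization of $\overline{P}_p$ modulo $\ell$, reduce both cases $r=1,2$ to the single template $g(x)$, and then compare coefficients, using the $p$-Weil reciprocity of $P_p$ and $g$ to see that only the $x^5,x^4,x^3$ relations are independent. The elimination of $\bar\delta^2$ produces the linear relation with leading coefficient $-(p-1)(p+1-\alpha)$, and since $\ell\nmid K_p$ forbids $\ell$ from dividing $p(p-1)(p+1-\alpha)$, you can solve for $\bar\delta$; this gives $P_p\equiv g\pmod{\ell}$ and hence $\ell\mid k$. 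All of this matches the paper. (Small slip: you need to \emph{subtract} $(p+1)$ times the $x^4$ relation from the $x^3$ relation, not add.)

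The genuine gap is the assertion $K_p\neq 0$. You say that $k\ne 0$, i.e.\ $P_p\ne g$, ``is a single numerical check, and is expected generically.'' But the lemma claims $K_p\ne0$ unconditionally for every prime $p$ of good reduction with $p+1\ne\alpha$, so this must be \emph{proved}, and it is not a finite computation because $p$ ranges over infinitely many primes and $P_p$ depends on $A$. The paper's argument at this point uses Lemma~\ref{lem:Weil} in an essential way: suppose $P_p=g$. Since $g\in\Q[x]$ is a product of two cubics with rational coefficients, $P_p$ has at least two real roots; by Lemma~\ref{lem:Weil}, $(x^2-p)^2\mid P_p$, forcing $P_p=(x^2-p)^2(x^2+cx+p)$. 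But then examining the rational factorization of the two cubic factors of $g$ (each has constant term $-p$ or $-p^2$) produces a rational root of absolute value $1$ or $p$, contradicting the Weil bound $\lvert\text{roots}\rvert=\sqrt{p}$. You should include an argument of this type; without it the lemma's statement $K_p\ne0$ is unproved.
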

\begin{lem}\label{lem:case2}
Let $p$ be a prime of good reduction for $A$. 
Write $\alpha$, $\beta$ and $\gamma$ for 
the coefficients $\alpha_p$, 
$\beta_p$, $\gamma_p$ in \eqref{eqn:Pp}. Suppose $p^3+1 \ne p\alpha$
(this is true for $p \ge 5$ as $\lvert \alpha \rvert \le 6 \sqrt{p}$).
Let
\begin{equation}\label{eqn:ded}
\delta^\prime=
\frac{
-p^5\alpha + p^4 + p^3\alpha^2 - p^3\beta - p^2\alpha + p\gamma + p - \beta
 }{
(p^3-1)(p^3+1-p\alpha)  
} \in \Q,
\qquad
\epsilon^\prime= p\delta^\prime+\alpha \in \Q.
\end{equation}
Let
\begin{equation}\label{eqn:gd}
g^\prime(x)=
(x^3+\epsilon^\prime x^2+\delta^\prime x-1)(x^3-p\delta^\prime x^2-p^2 \epsilon^\prime x-p^3) \in \Q[x] .
\end{equation}
Write $k^\prime$ for the greatest common divisor of the numerators of 
the coefficients in $P_p-g^\prime$. Let 
\[
K_p^\prime=p(p^3-1)(p^3+1-p\alpha) k^\prime.
\]
Then $K_p^\prime \ne 0$. Moreover, if 
$\ell \nmid K_p^\prime$ then $A[\ell]$ does not have a Jordan--H\"older
filtration as in Lemma~\ref{lem:3d} with $\det(U)=1$ or $\chi^3$.
\end{lem}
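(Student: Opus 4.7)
The plan mirrors the strategy of Lemma~\ref{lem:case1}, adjusting the exponents to reflect the condition $\det(U)=1$ (in place of $\det(U)=\chi$). Suppose $A[\ell]$ admits a filtration $0\subset U\subset A[\ell]$ as in Lemma~\ref{lem:3d} with $\det(U)=1$; the case $\det(U)=\chi^3$ is entirely symmetric, swapping the roles of $U$ and $A[\ell]/U$ and preserving the shape of $g^\prime$. Since $\det M(\sigma_p)=1$, the characteristic polynomial of $M(\sigma_p)\in\GL_3(\F_\ell)$ has the form $x^3+ex^2+dx-1$ for some $d,e\in\F_\ell$; a short computation with elementary symmetric functions of reciprocal roots yields the characteristic polynomial of $pM(\sigma_p)^{-t}$ as $x^3-pdx^2-p^2 ex-p^3$. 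Combining the two diagonal blocks furnished by Lemma~\ref{lem:3d} and reducing $P_p$ modulo $\ell$, we get
\[
P_p(x)\;\equiv\;(x^3+ex^2+dx-1)(x^3-pdx^2-p^2 ex-p^3)\pmod{\ell}.
\]

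Next I would extract the formula \eqref{eqn:ded} by coefficient comparison. The $x^5$, $x^4$, $x^3$ coefficients yield three modular relations; the $x^2$, $x^1$, $x^0$ relations are equivalent by the functional-equation symmetry $x^6 P_p(p/x)=p^6 P_p(x)$. The $x^5$ relation is linear, giving $e=\alpha+pd$, which I would use to eliminate $e$. The $x^4$ and $x^3$ relations then become quadratic in $d$, but a suitable combination (multiplying the $x^4$ identity by $p^3+1$ and combining with a multiple of the $x^3$ identity) cancels the quadratic part and leaves a linear relation whose coefficient of $d$ factors cleanly as $(p^3-1)(p^3+1-p\alpha)$. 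Verifying this factorisation---and matching the right hand side to the numerator in \eqref{eqn:ded}---is the main calculation, and is the structural bit of luck that makes the lemma work. Under the hypothesis $\ell\nmid p(p^3-1)(p^3+1-p\alpha)$, the denominator is invertible modulo $\ell$, giving $d\equiv\delta^\prime\pmod\ell$ with $\delta^\prime\in\Q$ as in \eqref{eqn:ded}, and then $e\equiv\epsilon^\prime\pmod\ell$.

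With these specific rational values $\delta^\prime,\epsilon^\prime$ plugged into \eqref{eqn:gd}, the coefficients of $g^\prime(x)\in\Q[x]$ have denominators coprime to $\ell$, and the analysis above shows $g^\prime\equiv P_p\pmod\ell$ in $\F_\ell[x]$. Hence $\ell$ divides the numerator of every coefficient of $P_p-g^\prime$, so $\ell\mid k^\prime$ and in particular $\ell\mid K_p^\prime$.

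Finally I would verify $K_p^\prime\neq 0$. The factors $p$ and $p^3-1$ are plainly nonzero; and $p^3+1-p\alpha\neq 0$ follows from $|\alpha|\le 6\sqrt p$ together with $p\ge 5$ (as then $p^3+1>6p^{3/2}\ge|p\alpha|$). The substantive step is to rule out $k^\prime=0$, i.e.\ the equality $P_p=g^\prime$ in $\Q[x]$. If that equality held, $P_p$ would admit the rational cubic factor $x^3+\epsilon^\prime x^2+\delta^\prime x-1$, whose three complex roots would be roots of $P_p$, hence of absolute value $\sqrt p$ by Weil, forcing the absolute value of the product of roots to be $p^{3/2}$. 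But the constant term of the factor forces this product to equal $1$, contradicting $p^{3/2}>1$ for $p\ge 2$. Thus $k^\prime\neq 0$ and $K_p^\prime\neq 0$, completing the proof.
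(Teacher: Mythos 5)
Your proposal is correct and follows essentially the same route as the paper: the paper proves Lemmas~\ref{lem:case1} and~\ref{lem:case2} simultaneously via the parametrization $r\in\{0,1,2,3\}$, with $r=0,3$ producing exactly the factorization $(x^3+ax^2+bx-1)(x^3-pbx^2-p^2ax-p^3)$ you write down, followed by the same linear-elimination in $b$. One small slip: the functional equation for $P_p$ is $x^6P_p(p/x)=p^3P_p(x)$, not $p^6P_p(x)$, though this does not affect your argument. Your verification that $K_p^\prime\neq 0$ (constant term $-1$ forces the product of three Weil-roots to have absolute value $1$ rather than $p^{3/2}$) is actually cleaner and more direct than the paper's detour through Lemma~\ref{lem:Weil} and rational roots, but it rests on the same underlying observation.
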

\begin{proof}[Proofs of Lemma~\ref{lem:case1} and~\ref{lem:case2}]
For now let $p$  be a prime of good reduction for $A$,
and suppose that $\ell \ne p$.
Suppose $A[\ell]$ has a Jordan--H\"older filtration 
$0 \subset U \subset A[\ell]$ where $U$ and $A[\ell]/U$
are $3$-dimensional (i.e.\ as in Lemma~\ref{lem:3d}).
Then $\det(U)=\chi^r$ with $0 \le r \le 3$.
Let $\sigma_p \in G_\Q$ denote a Frobenius element at $p$.
Let $M=M(\sigma_p)$ as in Lemma~\ref{lem:3d}. Then $\det(M)=\overline{p}^r \in \F_\ell$.
Moreover, from the lemma,
\[
P_p(x) \equiv \det(xI-M)\det(xI-p M^{-1}) \pmod{\ell}.
\]
Write
\[
\det(xI-M) \equiv x^3+u x^2+v x - p^r \pmod{\ell}.
\]
Then
\begin{align*}
\det(xI-p M^{-1})
& =-p^{-r} \cdot x^3 \cdot \det(p x^{-1} I - M) \\
& \equiv x^3-p^{1-r} v x^2-p^{2-r} u x-p^{3-r} \pmod{\ell}. 
\end{align*}
Let
\[
a=\begin{cases}
u & \text{if $r=0$ or $1$}\\
-p^{-1} v & \text{if $r=2$}\\
-p^{-2} v & \text{if $r=3$}
\end{cases}
\qquad
b=\begin{cases}
v & \text{if $r=0$ or $1$}\\
-u & \text{if $r=2$}\\
-p^{-1} u & \text{if $r=3$}.
\end{cases}
\]
If $r=1$ or $2$ then 
\begin{equation}\label{eqn:case1}
P_p(x) \equiv (x^3+a x^2+bx-p)(x^3-bx^2-pa x-p^2) \pmod{\ell}.
\end{equation}
If $r=0$ or $3$ then
\begin{equation}\label{eqn:case2}
P_p(x) \equiv (x^3+ax^2+bx-1)(x^3-pb x^2-p^2 a x-p^3) \pmod{\ell}.
\end{equation}
We now suppose that $\ell \nmid K_p$
and prove Lemma~\ref{lem:case1} which corresponds to $r=1$ or $2$.
We thus suppose that \eqref{eqn:case1} holds. 
Comparing the 
coefficients of $x^5$ in \eqref{eqn:case1} we have that
$a \equiv b+\alpha \pmod{\ell}$. Substituting this into \eqref{eqn:case1}
and comparing the coefficients of $x^4$ and $x^3$ we obtain
\begin{align*}
b^2  + (p + \alpha - 1)\cdot b  + (p\alpha + \beta)  \; & \equiv \; 0 \pmod{\ell}\\
(p + 1)\cdot b^2  + 2 p \alpha \cdot b  + (p^2 + p\alpha^2 + p+ \gamma) \;
 & \equiv \;  0 \pmod{\ell}. 
\end{align*}
Eliminating $b^2$ we obtain the following congruence which is linear in $b$:
\[
-(p-1)(p+1-\alpha) \cdot b +
(- p^2\alpha + p^2 + p\alpha^2 - p\alpha - p\beta 
    + p - \beta + \gamma )
\equiv 0 \pmod{\ell}.
\]
As $\ell \nmid K_p$ we have
$\ell \nmid (p-1)(p+1-\alpha)$, and so we can solve for $b$ mod $\ell$.
It follows that $b \equiv \delta$ and $a \equiv b+\alpha \equiv \epsilon
\pmod{\ell}$ where $\delta$ and $\epsilon$ are given by \eqref{eqn:de}.
Substituting into \eqref{eqn:case1}, we see that
$P_p \equiv g \pmod{\ell}$ where $g$ is given by \eqref{eqn:g}.
Thus $\ell$ divides the greatest common divisor of the numerators
of the coefficients of $P_p-g$ showing that $\ell \mid k$ (in the notation
of Lemma~\ref{lem:case1}). As $k \mid K_p$ and $\ell \nmid K_p$
we obtain a contradiction. Thus if $\ell \nmid K_p$ then
$A[\ell]$ does not have a Jordan--H\"{o}lder filtration 
as in Lemma~\ref{lem:3d} with $\det(U)=\chi$ or $\chi^2$.

We need to show that $K_p \ne 0$. We are supposing that
$p+1 \ne \alpha$ thus we need to show that $P_p \ne g$.
Suppose $P_p=g$. 
As $g$ is the product of two cubic
polynomials, it follows that $P_p$ has at least two real roots.
By Lemma~\ref{lem:Weil}, we see that $(x^2-p)^2 \mid P_p$.
It follows that $P_p=g$ must have two rational roots.
Since all the roots have absolute value $\sqrt{p}$,
this is a contradiction. We deduce that $K_p \ne 0$
as required. This completes the proof of Lemma~\ref{lem:case1}.
The proof of Lemma~\ref{lem:case2} is practically identical.
\end{proof}

\subsection*{Summary}
The following theorem summarizes Section~\ref{sec:bound}. 
\begin{thm}
Let $A$ and $\ell$ satisfy conditions (a)--(d)
at the beginning of Section~\ref{sec:bound}.
Let $T$ be a non-empty set of primes of good reduction for $A$.  
Let
\[
B_3(T)=\gcd(\{K_p \; : \; p \in T\}), \qquad
B_4(T)=\gcd(\{K^\prime_p \; : \; p \in T\}),
\]
where $K_p$ and $K^\prime_p$ are defined in Lemmas~\ref{lem:case1} and~\ref{lem:case2}.
Let
\[
B(T)=\lcm(B_2(T),B_3(T),B_4(T))
\]
where $B_2(T)$ is as in Lemma~\ref{lem:Serre}. If $\ell \nmid B(T)$ then
$\overline{\rho}_{A,\ell}$ is surjective.
\end{thm}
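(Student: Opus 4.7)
The plan is to combine Theorem~\ref{thm:semistable} with a case analysis on the dimension of a possible nontrivial proper $G_\Q$-invariant subspace of $A[\ell]$, using each of the lemmas assembled in Section~\ref{sec:bound} to eliminate the corresponding case whenever $\ell$ avoids the appropriate divisor of $B(T)$. First I would note that conditions (a)--(d) imply, via \cite[Section 2]{Hall11}, that the image of $\overline{\rho}_{A,\ell}$ contains a transvection, and that $\ell \ge 5 \ge \max(5, d+2) = 5$ for $d=3$. Therefore Theorem~\ref{thm:semistable} applies and it suffices to prove that $\overline{\rho}_{A,\ell}$ is irreducible, i.e.\ that the $G_\Q$-module $A[\ell]$ has no proper nonzero invariant subspace.

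Since $\dim A[\ell] = 6$, any such invariant subspace has dimension $1$, $2$, $3$, $4$, or $5$. Because $B_1(T) \mid B_2(T) \mid B(T)$, the assumption $\ell \nmid B(T)$ gives $\ell \nmid B_1(T)$, and Lemma~\ref{lem:1d} rules out invariant subspaces of dimension $1$ and $5$ (the latter via its $1$-dimensional quotient, which by semisimplicity considerations on Jordan--H\"older factors also forces a $1$-dimensional factor). Next, $\ell \nmid B_2(T)$ together with Lemma~\ref{lem:Serre} rules out irreducible invariant subspaces of dimension $2$ and $4$, where the key input was Lemma~\ref{lem:2d} producing a $2$-dimensional Jordan--H\"older factor with determinant $\chi$, to which Serre's modularity (Khare--Wintenberger) was applied to obtain the resultant bound $R(M,p)$.

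It remains to rule out a $3$-dimensional invariant subspace $U \subset A[\ell]$. Lemma~\ref{lem:3d} forces $U = U^\perp$ under the Weil pairing and gives the explicit block-triangular shape of $\overline{\rho}_{A,\ell}$. Combined with Lemma~\ref{lem:det}, the determinant of $U$ must be $\chi^r$ for some $r \in \{0,1,2,3\}$, and the two cases $r \in \{1,2\}$ and $r \in \{0,3\}$ are handled respectively by Lemmas~\ref{lem:case1} and~\ref{lem:case2}: the hypothesis $\ell \nmid K_p$ (resp.\ $\ell \nmid K_p'$) for some $p \in T$ eliminates each possibility. Since $\ell \nmid B_3(T) = \gcd\{K_p\}$ and $\ell \nmid B_4(T) = \gcd\{K_p'\}$, there exist primes $p, p' \in T$ with $\ell \nmid K_p$ and $\ell \nmid K_{p'}'$, completing the exclusion of the $3$-dimensional case.

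Putting everything together, under $\ell \nmid B(T)$ the module $A[\ell]$ admits no invariant subspace of any dimension between $1$ and $5$, hence is irreducible; Theorem~\ref{thm:semistable} then forces surjectivity. The main obstacle in the write-up is really just bookkeeping: ensuring that each of $B_1, B_2, B_3, B_4$ is indeed divisible into $B(T)$ as claimed (in particular verifying once more that $B_1(T) \mid B_2(T)$ by the definition in Section~\ref{sec:bound}) and that the lemmas of Section~\ref{sec:bound} collectively cover all possible invariant subspaces. No new computation is needed beyond invoking the structural results already established.
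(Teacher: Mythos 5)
Your proposal is correct and follows the same strategy as the paper's (much terser) proof: invoke Theorem~\ref{thm:semistable} to reduce to ruling out reducibility, then use Lemmas~\ref{lem:1d}, \ref{lem:Serre}, \ref{lem:case1} and~\ref{lem:case2} to exclude invariant subspaces dimension by dimension via the divisibility $B_1(T)\mid B_2(T)\mid B(T)$ and the definitions of $B_3(T)$, $B_4(T)$. The only place you are slightly looser than ideal is in reducing the general $3$-dimensional case to the hypotheses of Lemma~\ref{lem:3d} (that both $U$ and $A[\ell]/U$ are irreducible): one should note that, since $1$- and $5$-dimensional Jordan--H\"older factors and irreducible $2$- and $4$-dimensional subspaces are already excluded, a minimal nonzero invariant subspace must be irreducible of dimension $3$ with irreducible $3$-dimensional quotient; but this is the same implicit step the paper itself takes, so there is no genuine gap.
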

\begin{proof}
By Theorem~\ref{thm:semistable} we know that $\overline{\rho}_{A,\ell}$
is either reducible or surjective. Lemmas~\ref{lem:Serre},~\ref{lem:case1} and~\ref{lem:case2}
ensure that $\overline{\rho}_{A,\ell}$ cannot be reducible. Hence it must be surjective.
\end{proof}

\section{Proof of Corollary~\ref{cor:surjective}}
We implemented the method described in Section~\ref{sec:bound} in \texttt{Magma}~\cite{Magma}.
The model given in \eqref{eqn:gen3} for the curve $C$ has good reduction at $2$.
Let $J$ be the Jacobian of $C$. This has conductor $N=8907=3 \times 2969$ 
(the algorithm used by \texttt{Magma}
for computing the conductor is described in \cite{doccond}). 
As $N$ is squarefree, the Jacobian $J$ is semistable.
Completing the square
in \eqref{eqn:gen3} we see that the curve $C$ has the following \lq simplified\rq\
Weierstrass model.
\[
y^2 = x^8 + 2x^7 + 5x^6 + 6x^5 + 4x^4 + 2x^3
    + x^2 + 2x + 1.
\]
Denote the polynomial on the right-hand side by $f$. Then
\[
f \equiv (x + 1)(x+2)^2 (x^2 + x + 2 ) (x^3 + 2x^2 + 2x + 2) \pmod{3}
\]
and
\[
f \equiv (x + 1) (x + 340) (x + 983)^2 (x^2 + x + 1)(x^2 + 663 x + 1350) \pmod{2969}.
\]
Here the non-linear factors in both factorizations are irreducible. As $f$ has precisely
one double root in $\overline{\F}_3$ and one double root in $\overline{\F}_{2969}$
with all other roots simple, we see that the N\'eron models for $J$ at $3$ and $2969$
have special fibres with toric dimension $1$. 
We found that $\# \Phi_{3}=\# \Phi_{2969}=1$.
Thus the image 
of $\overline{\rho}_{J,\ell}$ contains a transvection for all $\ell \ge 3$.

We now suppose $\ell \ge 5$. 
By Theorem~\ref{thm:semistable} we know that $\overline{\rho}_{J,\ell}$
is either reducible or surjective.
In the notation of Section~\ref{sec:bound}, we  
take our chosen set of primes of good reduction to be 
$T=\{2,5,7\}$.
We note that
\[
\# J(\F_2)=2^5, \qquad \# J(\F_5)=2^7, \qquad \# J(\F_7)=2^6 \times 7.
\]
It follows from Lemma~\ref{lem:1d} that $J[\ell]$ does not have $1$- or $5$-dimensional Jordan--H\"{o}lder factors.
Next we consider the existence of $2$- or $4$-dimensional irreducible subspaces. The possible
values $M \mid N$ such that $S_2^{\mathrm{new}}(M) \ne 0$ are $M=2969$ and $M=8907$, where the dimensions are $247$ and $495$ respectively.
Unsurprisingly, the resultants $R(M,p)$ (defined in \eqref{eqn:RMp})
are too large to reproduce here.
For example, we indicate that $R(8907,7) \sim 1.63\times 10^{2344}$. However,
\[
R(M,T)=\gcd\left( 2 \cdot R(M,2), \;
5 \cdot R(M,5),\; 7 \cdot R(M,7) \right)=
\begin{cases} 
2^4 & M=2969 , \\
2^{22} & M=8907 .
\end{cases}
\]
It follows from Lemma~\ref{lem:Serre} that $J[\ell]$ does not have
$2$- or $4$-dimensional irreducible subspaces. It remains to eliminate
the possibility of a Jordan--H\"older filtration $0 \subset U \subset J[\ell]$
where both $U$ and $J[\ell]/U$ are $3$-dimensional.
In the notation of Lemma~\ref{lem:case1},
\[
K_2=14,
\qquad K_5=6900,
\qquad K_7=83202 .
\]
Then $\gcd(K_2,K_5,K_7)=2$. Lemma~\ref{lem:case1} eliminates the
case where $\det(U)=\chi$ or $\chi^2$. Moreover,
\[
K_2^\prime=154490,\qquad
K_5^\prime=15531373270380,\qquad
K_7^\prime=10908656905042386.
\]
Then $\gcd(K_2^\prime,K_3^\prime,K_7^\prime)=2$.
Lemma~\ref{lem:case2} eliminates the case where $\det(U)=1$
or $\chi^3$. It follows that $\overline{\rho}_{J,\ell}$ is
irreducible and hence surjective for all $\ell \ge 5$.

\bigskip

It remains to show that $\overline{\rho}_{J,3}$ is surjective.
Denote $\overline{\rho}=\overline{\rho}_{J,3}$.
Write $G=\overline{\rho}(G_\Q)$. For a prime $p$ of good reduction,
let $\sigma_p \in G_\Q$ denote a Frobenius element at $p$ and 
$\overline{P}_p \in \F_3[t]$ be 
the characteristic polynomial of $\sigma_p$ acting
on $J[3]$. Let $N_p$ be the multiplicative order of the image
of $t$ in the algebra $\F_{3}[t]/\overline{P}_p$.
It is immediate that $N_p$ divides the order of $\overline{\rho}(\sigma_p)$
and hence divides the order of $G$. We computed
\[
N_2=2^3 \times 5, \qquad N_5=2 \times 13, \qquad
N_{19}=7, \qquad
N_{37}=2 \times 3^2.
\]
Thus the order of $G$ is divisible by $2^3 \times 3^2 \times 5 \times 7 \times 13$. We checked that the only subgroups of $\GSp_6(\F_3)$ with order divisible
by this are $\Sp_6(\F_3)$ and  $\GSp_6(\F_3)$. 
As the mod $3$ cyclotomic character is surjective on $G_\Q$, we have that
$G=\GSp_6(\F_3)$. This completes the proof of the corollary.

\bibliographystyle{annotate}
\bibliography{biblio}

\end{document}